\theoremstyle{plain}
\newtheorem{theorem}{Theorem}[section]
\newtheorem{cor}[theorem]{Corollary}
\newtheorem{prop}[theorem]{Proposition}
\newtheorem{lemma}[theorem]{Lemma}
\newtheorem*{claimstern}{Claim}
\newenvironment{claimsternproof}{\noindent\textit{Proof of
    Claim.}}{\hfill\qedsymbol \tiny{ Claim}
\medskip}
\theoremstyle{definition}
\newtheorem{remark}[theorem]{Remark}
\newtheorem{fact}[theorem]{Fact}
\newtheorem{definition}[theorem]{Definition}
\newtheorem*{question}{Question}
\newtheorem*{notation}{Notation}
\newcommand{\nc}{\newcommand}
\nc{\Z}{\mathbb{Z}}
\nc{\N}{\mathbb{N}}
\nc\LL{\mathcal L}
\renewcommand{\phi}{\varphi}
\nc\ord{\operatorname{ord}}
\nc{\acl}{\operatorname{acl}}
\nc{\aclq}{\operatorname{acl^{eq}}}
\nc\inv{ ^{-1}}
\nc{\tp}{\operatorname{tp}}
\nc{\cf}{\text{cf. }}
\nc\cl{\operatorname{icl}}
\nc\icl{indiscernibly closed\xspace}
\nc\icyc{indiscernibly acyclic\xspace}
\nc\pf{\rightarrow} 
\nc\Ker{\operatorname{Ker}}
\nc{\restr}[1]{\!\!\upharpoonright_{#1}}
\nc{\mi}{\mathrm{M}_\infty}  
\nc{\lmto}[1]{\xrightarrow[{#1}]{}} 
\nc{\dd}{\operatorname{d}} 
\nc{\ps}{\mathrm{PS}} 
\nc{\cps}{\mathrm{CPS}} 
\nc{\fcl}[1]{\langle #1 \rangle}
\nc{\EP}{\mathrm{EP}} 
\nc{\expt}{\mathrm{ep}} 
\nc{\de}{\mathrm{defect}} 
\def\Ind#1#2{#1\setbox0=\hbox{$#1x$}\kern\wd0\hbox to 0pt{\hss$#1\mid$\hss}
\lower.9\ht0\hbox to 0pt{\hss$#1\smile$\hss}\kern\wd0}
\def\Notind#1#2{#1\setbox0=\hbox{$#1x$}\kern\wd0\hbox to
0pt{\mathchardef\nn="0236\hss$#1\nn$\kern1.4\wd0\hss}\hbox
to 0pt{\hss$#1\mid$\hss}\lower.9\ht0
\hbox to 0pt{\hss$#1\smile$\hss}\kern\wd0}
\def\ind{\mathop{\mathpalette\Ind{}}}
\def\ld{\mathop{\ \ \hbox to 0pt{\hss$\mid^{\hbox to
0pt{$\scriptstyle\mathrm{ld}$\hss}}$\hss}
\lower4pt\hbox to 0pt{\hss$\smile$\hss}\ \ }}
\begin{document}

\title{Trois couleurs: A new non-equational theory}
\date{September 14, 2020 }

\author{ Amador Martin-Pizarro and Martin Ziegler}
\address{Mathematisches Institut,
  Albert-Ludwigs-Universit\"at Freiburg, Ernst-Zermelo-Str 1,  D-79104
  Freiburg, Germany}
\email{pizarro@math.uni-freiburg.de}
\email{ziegler@uni-freiburg.de}
\thanks{ The first author conducted research partially supported
by the program GeoMod AAPG2019 (ANR-DFG). Both authors were
supported by the program MTM2017-86777-P}
\keywords{Model Theory, Equationality}
\subjclass{03C45}

\begin{abstract}
  A first-order theory is equational if every definable set is a
  Boolean combination of instances of equations, that is, of formulae
  such that the family of finite intersections of instances has the
  descending chain condition. Equationality is a strengthening of
  stability yet so far only two examples of non-equational stable
  theories are known. We construct non-equational $\omega$-stable theories by
  a suitable colouring of the free pseudospace, based on Hrushovski
  and Srour's original example.
\end{abstract}

\maketitle

\section{Introduction}

Consider a first order complete theory $T$. A formula $\phi(x;y)$ is
an \emph{equation} (for a given partition of the free variables into
$x$ and $y$) if, in every model of $T$, the family of finite
intersections of instances $\phi(x,a)$ has the descending chain
condition. The theory $T$ is \emph{equational} if every formula
$\psi(x;y)$ is equivalent modulo $T$ to a Boolean combination of
equations $\phi(x;y)$.

Determining whether a particular stable theory is equational is not
obvious. So far, the only known \emph{natural} example of a stable
non-equational theory is the free non-abelian group
\cite{zS13, MS17}, though the first example of a non-equational stable
theory is of combinatorial nature and appeared in unpublished notes of
Hrushovski and Srour \cite{HS89}. They coloured the free pseudospace
\cite{BP00} with two colours in order to obtain two types $r(x,y)\neq
r'(x,y)$ which are not \emph{equationally separated}, according to the
termino\-logy of \cite[Section 2.1]{eH12}, that is, there are
sequences $(a_i, b_i)_{i\in \mathbb{N}}$ and $(c_i, d_i)_{i\in
  \mathbb{N}}$, which can be taken indiscernible over $\emptyset$,
such that $r(a_i,b_i)$ and $r'(c_i, d_i)$ holds for all $i$, but
$r'(a_i, b_j)$ and $r(c_i, d_j)$ holds for $i<j$. In an equational
theory, any two distinct types are equationally separated.

In all previously known examples of non-equational theories the
failure of equationality is due
to the presence of two distinct non-equationally separated types
$r(x,y)\neq r'(x,y)$ such that the length of $x$ is $1$. In this note,
we will build on Hrushovski-Srour's example in order to construct new
examples of non-equational theories, where all distinct real types
$p\neq q$ in finitely many variables are equationally separated.
\section{Equations and indiscernibly closed sets}

Most of the results in this section come from \cite{PiSr84, mJdL01}.

Consider a first order theory $T$. A formula $\phi(x;y)$ is an
\emph{equation} (with respect to a given partition of the free
variables into $x$ and $y$) if, in every model of $T$, the family of
finite intersections of instances $\phi(x,b)$ has the descending chain
condition. An easy compactness argument shows

\begin{lemma}\label{L:eq_seq}
  The formula $\phi(x;y)$ is an equation if there is no sequence
  $(a_i,b_i)_{i\in\N}$ in any model $M$ such that
  $M\models\phi(a_i,b_j)$ and $M\not\models\phi(a_i,b_i)$ for all
  $i<j$.
\end{lemma}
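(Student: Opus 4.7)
The plan is to prove the contrapositive: assuming $\phi(x;y)$ is not an equation, I will exhibit the desired infinite sequence via a finitary index reversal followed by compactness.

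First, I would extract from the failure of DCC on finite intersections parameters $b_0^*, b_1^*, \ldots$ and witnesses $c_0, c_1, \ldots$ (in some model) such that $\phi(c_n, b_k^*)$ holds for every $k < n$, while $\neg \phi(c_n, b_n^*)$. Given a strictly descending chain $F_1 \supsetneq F_2 \supsetneq \cdots$ of finite intersections, I may arrange increasing parameter sets $T_1 \subseteq T_2 \subseteq \cdots$ (by replacing $F_n$ with $F_1 \cap \cdots \cap F_n$, which still equals $F_n$ since the chain is descending); then any $c_n \in F_n \setminus F_{n+1}$ must fail $\phi(c_n, b)$ for some $b =: b_n^* \in T_{n+1} \setminus T_n$, and $\phi(c_n, b_k^*)$ then holds automatically for $k < n$ because $b_k^* \in T_{k+1} \subseteq T_n$. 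This places the positive $\phi$-instances \emph{below} the diagonal, whereas the lemma asks for them \emph{above}.

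To flip this, for each finite $N$ I would set $a_i := c_{N-1-i}$ and $b'_i := b^*_{N-1-i}$ for $i < N$. Then $\phi(a_i, b'_j)$ holds iff $N-1-j < N-1-i$, i.e.\ iff $i < j$, while $\neg \phi(a_i, b'_i)$ is automatic. Hence every finite fragment of the partial type $\{\phi(x_i, y_j) : i < j < \omega\} \cup \{\neg \phi(x_i, y_i) : i < \omega\}$ is realised, and compactness produces an infinite realisation in an elementary extension.

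The main subtlety is noticing that the descending-chain data naturally sits on the wrong side of the diagonal, and that a finite index reversal before compactness is required to reach the stated form; the rest is routine bookkeeping.
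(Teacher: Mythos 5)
Your proof is correct and is exactly the sort of ``easy compactness argument'' the paper invokes without giving details: pass to the contrapositive, cumulate the parameters of a strictly descending chain, extract witnesses $c_n$ and $b_n^*$ with the positive instances below the diagonal, reverse the indices on finite fragments, and conclude by compactness in an elementary extension. The only blemish is the claim that $\phi(a_i,b'_j)$ holds \emph{iff} $i<j$ --- nothing is known for $i>j$ --- but since your partial type only demands the instances you actually established ($i<j$ positive, diagonal negative), this is harmless.
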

A Ramsey argument shows that, working in a sufficiently saturated
model, the sequence $(a_i,b_i)$ can be assumed to be indiscernible of
any infinite order type. Thus, if $\phi(x;y)$ is an equation, then so
are $\phi\inv(x;y)=\phi(y,x)$ and $\phi(f(x);y)$, whenever $f$ is a
$\emptyset$-definable function, which maps finite tuples to finite
tuples. Finite conjunctions and disjunctions of equations are again
equations. Note that equations are stable formulae.

In \cite{mJdL01}, an equivalent definition of equations was obtained
in terms of \icl sets: an element $c$ lies in the \emph{indiscernible
  closure} $\cl(X)$ of a set $X$ if there is an indiscernible sequence
$(a_i)_{i\in\N}$ such that $a_i$ lies in $X$ for $i >0$ and $a_0=c$.
Note that $X\subset \cl(X)$. A set $X$ is \emph{\icl} if $X=\cl(X)$.
\begin{lemma}\cite[Theorem 3.16]{mJdL01}\label{L:eq_icl}
  A formula $\phi(x;y)$ is an equation if and only if the set
  $\phi(M,b)$ is \icl in in every model $M$ of $T$.
\end{lemma}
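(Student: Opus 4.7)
The plan is to prove both implications by contraposition, in each case converting between the ``bad sequences'' of Lemma \ref{L:eq_seq} and failures of indiscernible-closedness. The common technical move is to extend an indiscernible sequence from $\N$ to $\Z$ inside a monster model, so that the natural symmetries of $(\Z,<)$---arbitrary shifts for one direction, reversal for the other---produce new indiscernible configurations matching the criterion of Lemma \ref{L:eq_seq}.

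For the implication from equationality to indiscernible-closedness, I suppose for contradiction that some $c$ lies in $\cl(\phi(M,b))\setminus\phi(M,b)$, witnessed by an indiscernible sequence $(a_i)_{i\in\N}$ with $a_0=c$ and $\phi(a_i,b)$ for $i\geq 1$. After extending to an indiscernible $(a_i)_{i\in\Z}$, the shift $a_i\mapsto a_{i-n}$ is elementary, hence extends for each $n$ to an automorphism $\sigma_n$ of the monster; setting $b_n:=\sigma_n(b)$ translates the hypotheses to $\phi(a_j,b_n)$ for $j\geq 1-n$ and $\neg\phi(a_{-n},b_n)$. The sequence $(a_{-n},b_n)_{n\in\N}$ then satisfies $\phi(a_{-n},b_m)$ for $n<m$ together with $\neg\phi(a_{-n},b_n)$, directly contradicting Lemma \ref{L:eq_seq}.

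For the converse, assume $\phi$ is not an equation. By Lemma \ref{L:eq_seq} together with the Ramsey argument in the paragraph following it, there is an indiscernible sequence $(a_i,b_i)_{i\in\N}$ with $\phi(a_i,b_j)$ for $i<j$ and $\neg\phi(a_i,b_i)$. Extend to an indiscernible $(a_i,b_i)_{i\in\Z}$ and specialise to $b=b_0$: one has $\phi(a_{-k},b_0)$ for every $k\geq 1$ and $\neg\phi(a_0,b_0)$. The reversed projection $(a_{-k})_{k\in\N}$ remains indiscernible, since any automorphism realising a translation in $(\Z,<)$ simultaneously realises the corresponding permutation of the reversed tuples; this sequence witnesses $a_0\in\cl(\phi(M,b_0))\setminus\phi(M,b_0)$.

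The main obstacle in both directions is the same point: one must verify that the obvious relabelings of a $\Z$-indiscernible sequence---shifts in $(\Rightarrow)$, reversal in $(\Leftarrow)$---yield genuine indiscernible $\N$-sequences in the monster. Once that is granted, both verifications reduce to a short bookkeeping matching index pairs $(i,j)$ against the half-plane $i<j$ appearing in Lemma \ref{L:eq_seq}.
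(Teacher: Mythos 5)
Your argument is correct and is essentially the paper's own proof: both directions go by contraposition, in one direction producing the parameters $b_n$ by applying shift automorphisms to a $\Z$-indexed indiscernible sequence (the paper's ``for every $j$ in $\Z$ there is $b_j$ with $\phi(a_i,b_j)$ for $i<j$ but $\neg\phi(a_j,b_j)$''), and in the other direction witnessing the failure of indiscernible closedness of $\phi(x,b_0)$ via the reversed tail $(a_{-k})_{k\in\N}$, a reversal the paper leaves implicit. The only point worth noting is that your closing appeal to Lemma \ref{L:eq_seq} uses it as an equivalence (a bad sequence implies non-equationality); this easy converse is not literally stated there, but it follows at once from the descending chain condition after reversing the index order, and the paper's own proof relies on the same implicit step.
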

\begin{proof}
  Let us work inside a sufficiently saturated model $M$. If
  $\phi(x;y)$ is not an equation, witnessed by the indiscernible
  sequence $(a_i,b_i)_{i\in\Z}$, as in Lemma \ref{L:eq_seq}, the set
  defined by $\phi(x,b_0)$ is not \icl, for it contains all $a_i$'s
  with $i<0$, but does not contain $a_0$. Conversely, if some instance
  $\phi(x,b)$ is not \icl, there is an indiscernible sequence
  $(a_i)_{i\in\Z}$ such that $M\models\phi(a_i,b)$ for $i<0$, but
  $M\not\models\phi(a_0,b)$. For every $j$ in $\mathbb Z$, there is an
  element $b_j$ in $M$ such that $M\models\phi(a_i,b_j)$ for $i<j$,
  but $M\not\models\phi(a_j,b_j)$.
\end{proof}

The theory $T$ is \emph{equational} if every formula $\psi(x;y)$ is
equivalent modulo $T$ to a Boolean combination of equations
$\phi(x;y)$. Since Boolean combinations of stable formulas are stable,
equational theories are stable.

Typical examples of equational theories are the theory of an
equivalence relation with infinite many infinite classes, the theory
of $R$-modules for some ring $R$, or the theory of algebraically
closed fields.

Equationality is preserved under bi-interpretability as well as
addition of  parameters \cite{Ju00}. It is unknown whether
equationality
holds if every formula $\phi(x;y)$, with $x$ a single variable, is a
boolean combination of equations.

It is easy to see that $T$ is equational if and only if all
completions of $T$ are equational. So for the rest of this section we
assume that $T$ is complete and work in a sufficiently saturated model
$\mathbb U$.

Notice that a theory $T$ is equational if and only if every type $p$
over $A$ is implied by its \emph{equational part} $\{\varphi(x,a) \in
p\mid \varphi(x;y) \text{ is an equation} \}$.

\begin{definition}\label{D:Pfeile}
  Given two types $p(x,b)$ and $q(x,b)$, define $p(x,b)\pf q(x,b)$ if
  $q(x,b) \subset \cl(p(x,b))$, or equivalently, if there is an
  indiscernible sequence $(a_i)_{i\in\N}$ such that all $\models
  p(a_i,b)$ for $i>0$ and $\models q(a_0,b)$. If $p(x,y)$ and $q(x,y)$
  are the the corresponding (complete) types over $\emptyset$, we
  write
\[p(x;y)\pf q(x;y).\]
\end{definition}
A standard argument as in Lemma \ref{L:eq_icl} with $p$ instead of
$\phi$ and and $q$ instead of $\neg\phi$ yields the following:
\begin{lemma}\label{L:pf_seq}
  We have that $p(x;y)\pf q(x;y)$ if an only if there is a sequence
  $(a_i,b_i)_{i\in\N}$ such that $\models p(a_i,b_j)$ for $i<j$, and
  $\models q(a_i,b_i)$ for all $i$. Furthermore, we may assume that
  the sequence is indiscernible and of any given infinite order type.
\end{lemma}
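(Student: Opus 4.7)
My plan is to imitate the structure of the proof of Lemma \ref{L:eq_icl}, with $p$ playing the role of $\phi$ and $q$ playing the role of $\neg\phi$, combined with a reversal step to align the direction of the indiscernible sequence given by Definition \ref{D:Pfeile}.

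For the forward direction, assume $p(x;y) \pf q(x;y)$, so there is some $b$ and an indiscernible sequence $(a_i)_{i\in\N}$ with $\models p(a_i,b)$ for $i>0$ and $\models q(a_0,b)$. First I reindex by reversal, setting $a'_{-i}:=a_i$ for $i\ge 0$: the resulting sequence $(a'_i)_{i\le 0}$ is still order-indiscernible because reversing a tuple is a fixed permutation, and the type of a decreasing tuple is the image under this permutation of the type of the corresponding increasing one, which depends only on its length. Now $\models q(a'_0,b)$ and $\models p(a'_i,b)$ for $i<0$. I then extend to an indiscernible sequence $(a'_i)_{i\in\Z}$ (the relevant formulas on $b$ refer only to already existing elements and are preserved), and for each $j\in\Z$ take an automorphism $\sigma_j$ of $\mathbb U$ satisfying $\sigma_j(a'_i)=a'_{i+j}$ for every $i$, which exists by indiscernibility. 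Setting $b_j:=\sigma_j(b)$ and applying $\sigma_j$ to the known formulas gives $\models q(a'_j,b_j)$ and $\models p(a'_k,b_j)$ for $k<j$. Restricting to $j\in\N$ yields pairs $(a_j,b_j)_{j\in\N}$ with the required properties.

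For the converse, suppose pairs $(a_i,b_i)_{i\in\N}$ satisfy the conditions. By Ramsey I may assume the sequence is indiscernible over $\emptyset$, and extend it to an indiscernible $(a_i,b_i)_{i\in\Z}$ with the same EM-type, so that $\models p(a_i,b_j)$ for $i<j$ in $\Z$ and $\models q(a_i,b_i)$ for all $i\in\Z$. Put $b:=b_0$ and define $c_i:=a_{-i}$ for $i\in\N$. By the same reversal argument as above the sequence $(c_i)_{i\in\N}$ is order-indiscernible, and $\models q(c_0,b)$, $\models p(c_i,b)$ for $i>0$, so $p(x;y)\pf q(x;y)$ by Definition \ref{D:Pfeile}. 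The ``furthermore'' clause follows from the standard compactness and Ramsey extraction: the EM-type of the sequence produced in the forward direction can be realised over any prescribed infinite linear order while preserving the conditions $\models p(a_i,b_j)$ for $i<j$ and $\models q(a_i,b_i)$, since these only constrain ordered pairs of indices.

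The main subtlety is verifying that reversal preserves order-indiscernibility without appealing to total indiscernibility (which would require stability): since reversal is a single fixed permutation of the index set, it transports the EM-type coherently, and the shift automorphisms together with the reversal give both directions essentially for free once the right indexing convention is chosen.
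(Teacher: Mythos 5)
Your proof is correct and follows essentially the same route the paper intends, namely the ``standard argument as in Lemma \ref{L:eq_icl}'' with $p$ in place of $\phi$ and $q$ in place of $\neg\phi$: reverse and extend the indiscernible sequence, use shift automorphisms of $\mathbb U$ to produce the $b_j$'s, and use Ramsey/EM-extraction for the converse and the ``furthermore'' clause. Your explicit check that reversal preserves order-indiscernibility (a fixed permutation of a monotone tuple, so the type still depends only on the length) correctly fills in the reindexing step the paper leaves implicit.
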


The above characterisation provides an easy proof of the following remark:
\begin{remark}\label{R:pf_alg}
  Clearly $p\pf p$. If $p(x;y)\pf q(x;y)$, then $p^{-1}\pf q^{-1}$,
  where $p^{-1}(x;y)=p(y;x)$.

  Furthermore, if $\tp(a;b)\pf \tp(a';b)$, then $a\stackrel{\mathrm
    stp}{\equiv} a'$. Thus, if $p(x;y)$ implies that $x$ (or $y$) is
  algebraic, then $p\pf q$ only when $q=p$.
\end{remark}

\begin{cor}\label{C:pf_f}
  Let $f$ and $g$ be $\emptyset$-definable functions and $a,a',b,b'$
  finite tuples, with $\tp(a;b)\pf\tp(a';b')$. Then
  $\tp(f(a);g(b))\pf\tp(f(a');g(b'))$.
\end{cor}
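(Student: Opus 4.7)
My plan is to apply Lemma \ref{L:pf_seq} to obtain a witnessing sequence for $\tp(a;b)\pf\tp(a';b')$, push it forward by $(f,g)$ componentwise, and read off the conclusion from the same lemma in the other direction.

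More concretely, first I would invoke Lemma \ref{L:pf_seq} to obtain an indiscernible sequence $(a_i,b_i)_{i\in\N}$ such that $(a_i,b_j)\equiv(a,b)$ for all $i<j$ and $(a_i,b_i)\equiv(a',b')$ for every $i$. Since $f$ and $g$ are $\emptyset$-definable, the sequence $(f(a_i),g(b_i))_{i\in\N}$ is still indiscernible (as the image of an indiscernible sequence under a $\emptyset$-definable map), and equality of types is preserved: $(f(a_i),g(b_j))\equiv(f(a),g(b))$ for $i<j$, while $(f(a_i),g(b_i))\equiv(f(a'),g(b'))$ for every $i$.

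Thus the sequence $(f(a_i),g(b_i))_{i\in\N}$ satisfies the criterion of Lemma \ref{L:pf_seq} with $p(x;y)=\tp(f(a);g(b))$ and $q(x;y)=\tp(f(a');g(b'))$, yielding $\tp(f(a);g(b))\pf\tp(f(a');g(b'))$, as desired.

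The argument is essentially bookkeeping once one has Lemma \ref{L:pf_seq}; the only small point to verify is that definable images of an indiscernible sequence remain indiscernible and that tuple-level types transfer correctly, both of which follow immediately from $f$ and $g$ being $\emptyset$-definable. I do not anticipate any genuine obstacle.
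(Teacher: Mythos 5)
Your proof is correct and follows exactly the route the paper intends: Corollary \ref{C:pf_f} is stated as an immediate consequence of the sequence characterisation in Lemma \ref{L:pf_seq}, obtained by pushing the witnessing sequence forward through the $\emptyset$-definable maps $f$ and $g$. (The preserved indiscernibility you note is true but not even needed, since the ``if'' direction of Lemma \ref{L:pf_seq} only requires a sequence with the stated type conditions.)
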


\begin{cor}\label{C:pf_cl}
  A formula $\varphi(x;y)$ is an equation if and only if, whenever a
  type $p(x, y)$ contains $\varphi(x,y)$ and $p(x;y)\pf q(x;y)$, then
  $\varphi(x,y)$ lies in $q(x;y)$.
\end{cor}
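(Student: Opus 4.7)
The plan is to handle both directions by converting the arrow $p \pf q$ into a concrete indiscernible sequence via Lemma~\ref{L:pf_seq} and then matching it against the sequence-free characterisation of equations in Lemma~\ref{L:eq_seq}.

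For the forward direction, assume $\varphi(x;y)$ is an equation, let $p(x;y)$ be a (complete) type over $\emptyset$ containing $\varphi(x,y)$, and suppose $p(x;y) \pf q(x;y)$. By Lemma~\ref{L:pf_seq}, there is a sequence $(a_i,b_i)_{i\in\N}$ with $\models p(a_i,b_j)$ for $i<j$ and $\models q(a_i,b_i)$ for all $i$. In particular, $\models \varphi(a_i,b_j)$ for $i<j$. If $\varphi(x,y)$ did not lie in $q(x;y)$, then $\models\neg\varphi(a_i,b_i)$ for all $i$, directly contradicting Lemma~\ref{L:eq_seq}. Hence $\varphi \in q$.

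For the converse I argue by contrapositive. If $\varphi(x;y)$ is not an equation, Lemma~\ref{L:eq_seq} produces a sequence $(a_i,b_i)_{i\in\N}$ with $\models \varphi(a_i,b_j)$ for $i<j$ and $\models \neg\varphi(a_i,b_i)$; by the Ramsey remark following Lemma~\ref{L:eq_seq}, we may take this sequence indiscernible over $\emptyset$. Set
\[
p(x,y) = \tp(a_0,b_1/\emptyset), \qquad q(x,y) = \tp(a_0,b_0/\emptyset);
\]
indiscernibility ensures these are well-defined (all pairs with $i<j$ realise $p$, all diagonal pairs realise $q$). Then $\varphi(x,y) \in p(x;y)$, while $\varphi(x,y) \notin q(x;y)$, and the very sequence $(a_i,b_i)$ witnesses, via Lemma~\ref{L:pf_seq}, that $p(x;y)\pf q(x;y)$. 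This contradicts the assumed property, so $\varphi$ must be an equation.

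The only genuinely delicate point is the converse: one has to promote the bare sequence from Lemma~\ref{L:eq_seq} to an indiscernible one so that $\tp(a_i,b_j/\emptyset)$ depends only on whether $i<j$ or $i=j$, allowing the definition of the two types $p$ and $q$ over $\emptyset$. This upgrade is already covered by the standard Ramsey argument mentioned after Lemma~\ref{L:eq_seq}, so no further work is needed.
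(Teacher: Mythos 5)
Your proof is correct and essentially the paper's argument: the converse direction (contrapositive, taking the indiscernible sequence of Lemma~\ref{L:eq_seq} and letting $p$ and $q$ be the off-diagonal and diagonal types) is identical, while for the forward direction the paper simply invokes Lemma~\ref{L:eq_icl} where you instead unwind Lemma~\ref{L:pf_seq}. The only (minor) point to flag is that your forward direction appeals to the converse of Lemma~\ref{L:eq_seq} as stated (that an equation admits no such sequence); this is immediate, since such a sequence yields a strictly descending chain of finite intersections of instances, and it is exactly what the citation of Lemma~\ref{L:eq_icl} covers in the paper.
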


\begin{proof}
  One direction follows clearly from Lemma \ref{L:eq_icl}. For the
  converse, assume that $\phi(x;y)$ is not an equation and choose an
  indiscernible sequence $(a_i,b_i)_{i\in\N}$ as in Lemma
  \ref{L:eq_seq}. Let $p$ be the common type of the pairs $(a_i,b_j)$,
  with $i<j$ and $q$ be the common type of the pairs $(a_i,b_i)$. Then
  $p\pf q$ and $\phi$ belongs to $p$, but not to $q$.
\end{proof}

\begin{definition}\label{D:iclacyclic}
  A \emph{cycle of types} is a sequence
  \[ p_0(x;y)\pf p_1(x;y)\pf\dotsb \pf
  p_{n-1}(x;y)\pf p_0(x;y).\] The cycle is \emph{proper} if all the
  $p_i$'s are different. The theory $T$ is \emph{\icyc} if there is no
  proper cycle of types of length $n\geq 2$.
\end{definition}

Following the terminology of \cite[Section 2.1]{eH12}, two distinct
types $p(x;y)$ and $q(x;y)$ are not \emph{equationally separated} if
and only $p\pf q\pf p$.

\begin{remark}\label{R:icon_st}
  Every \icyc theory is stable.
\end{remark}

\begin{proof}
  If there is a formula $\phi(x;y)$ in $T$ with the order property,
  find an indiscernible sequence $(a_i)_{i\in \Z}$ in $\mathbb U$ such
  that $\models \phi(a_i, a_j) \text{ if and only if } i<j$. Set
  $p=\tp(a_1;a_0)$ and $q=\tp(a_{-1};a_0)$. Then $p\neq q$, and since
  the sequence $(a_i)_{i\not=0}$ is indiscernible, we have that $p\pf
  q\pf p$, so there is a proper cycle of types of length $2$.
\end{proof}

\begin{remark}\label{R:eq_icon}
  Every equational theory is \icyc.
\end{remark}

\begin{proof}
  Consider a cycle \[ p_0\pf p_1\pf\dotsb \pf p_{n-1} \pf p_0.\] By
  Corollary \ref{C:pf_cl}, all the types $p_i$ contain the same
  equations, so they all agree, by equationality of $T$.
\end{proof}

\begin{definition}\label{D:MS}
  The theory $T$ \emph{satisfies the MS-criterion} if there is some
  formula $\varphi(x,y)$ and a matrix $(a_{ij},b_{ij})_{i, j\in \N}$
  such that:
  \begin{enumerate}
  \item\label{D:MS:phi} $\models \varphi(a_{ij},b_{il})$ if and only
    if $j=l$.
  \item\label{D:MS:iso} $a_{ij},b_{ij}\equiv a_{ij},b_{kl}$, whenever
    $i<k$ and $j<l$.
  \end{enumerate}
\end{definition}

\begin{lemma}\label{L:MS_noneq}
  If a theory $T$ satisfies the MS-criterion, then there is a proper
  cycle of types $p\pf q\pf p$. In particular, the theory is not
  equational (\cf \cite[Proposition 2.6]{MS17}).
\end{lemma}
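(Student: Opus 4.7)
The aim is to produce two distinct complete types $p\ne q$ over~$\emptyset$ with $p\pf q\pf p$; by Remark~\ref{R:eq_icon} an equational theory is indiscernibly acyclic, so the cycle alone forces $T$ to be non-equational. The matrix furnished by the MS-criterion contains two natural sequences running in ``orthogonal'' directions, and these are what I would use to witness the two arrows of the cycle.

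\emph{Regularising the matrix.} A standard Ramsey/compactness argument lets me replace $(a_{ij},b_{ij})_{i,j\in\N}$ by a sub-matrix in which, for any two index pairs $(i,j)$ and $(i',j')$, the $2$-type $\tp(a_{ij},b_{i'j'})$ depends only on the signs of $i-i'$ and of $j-j'$ (nine possible configurations). Conditions~(1) and~(2) of the MS-criterion survive restriction. I then set
\[
  p := \tp(a_{00},b_{00})\qquad\text{and}\qquad q := \tp(a_{00},b_{01}),
\]
both now well-defined over $\emptyset$. Condition~(1) gives $\varphi\in p$ and $\neg\varphi\in q$, so $p\ne q$.

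\emph{Direction $q\pf p$: using a row.} Put $c_j:=a_{0j}$ and $d_j:=b_{0j}$ for $j\in\N$. Regularity yields $\tp(c_j,d_j)=p$ for every~$j$ and $\tp(c_j,d_l)=q$ for every $j<l$ (both configurations are the defining ones for $p$ and $q$). Lemma~\ref{L:pf_seq} then gives $q\pf p$.

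\emph{Direction $p\pf q$: using a shifted diagonal.} Put $c_r:=a_{rr}$ and $d_r:=b_{r,r+1}$ for $r\in\N$. The pair $(c_r,d_r)$ has the same configuration as $(a_{00},b_{01})$, so by regularity $\tp(c_r,d_r)=q$. For $r<s$, apply condition~(2) with $(i,j)=(r,r)$ and $(k,l)=(s,s+1)$ (both $r<s$ and $r<s+1$ hold) to obtain
\[
  (a_{rr},b_{rr})\equiv(a_{rr},b_{s,s+1}),
\]
so $\tp(c_r,d_s)=\tp(a_{rr},b_{rr})=p$ by regularity. Lemma~\ref{L:pf_seq} then gives $p\pf q$. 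Combining, $p\pf q\pf p$ is proper, and Remark~\ref{R:eq_icon} finishes the proof.

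The only non-routine step is the Ramsey regularisation: one must extract a \emph{single} sub-matrix in which all nine $2$-type patterns are well-defined, so that the row sequence and the shifted-diagonal sequence land on exactly the same pair $\{p,q\}$. Everything else is a short calculation in which condition~(1) produces the failure of $\varphi$ inside a row and condition~(2) propagates $\varphi$ to the upper-right quadrant.
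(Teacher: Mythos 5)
Your proposal is correct and essentially reproduces the paper's argument: after extracting an indiscernible (regularised) array, both proofs take $p=\tp(a_{00};b_{00})$, $q=\tp(a_{00};b_{01})$, get $q\pf p$ from a row via Lemma~\ref{L:pf_seq}, and use condition~(2) of Definition~\ref{D:MS} to obtain $p\pf q$, with $\varphi$ witnessing $p\neq q$. The only cosmetic difference is that you run the second arrow along the shifted diagonal $(a_{rr},b_{r,r+1})$ and apply condition~(2) pointwise, whereas the paper uses the column sequence $(a_{i0},b_{i1})$ to get $r=\tp(a_{00};b_{11})\pf q$ and then identifies $r=p$ by condition~(2).
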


\begin{proof}
  We may assume that the matrix $(a_{ij},b_{ij})_{i,j \in \N}$ is
  indiscernible, that is, the type $\tp(a_{ij},b_{ij})_{i\in I, j\in
    J}$ only depends on $|I|$ and $|J|$. Set $p=\tp(a_{00};b_{00})$,
  $q=\tp(a_{00};b_{01})$ and $r=\tp(a_{00};b_{11})$. Since
  $(a_{0j}b_{0j})_{j\in \N}$ is indiscernible, we have $q\pf p$. Since
  $(a_{i0}b_{i1})_{i\in\N}$ is indiscernible, we have $r\pf q$.

  Now, by Definition \ref{D:MS} $(\ref{D:MS:phi})$, the formula
  $\varphi(x;y)$ belongs to $p(x;y)$ but not to $q(x;y)$, so $p\neq
  q$. By Definition \ref{D:MS} $(\ref{D:MS:iso})$ we have $p=r$, as desired.
\end{proof}
\noindent Since $p$ and $q$ contain the same equations, it follows
that the above formula $\phi$ cannot be a boolean combination of equations (cf.
\cite[Proposition 2.6]{MS17}).

Let us assume for the rest of this section that $T$ is stable.

\begin{lemma}\label{L:erweiterung}
  Let $p_0(x,b)\pf\dotsb \pf p_{n-1} (x,b)\pf p_0(x,b)$ be a proper
  cycle of types and $b'$ be some tuple such that $p_0(x,b)$ has only
  finitely many distinct non\-forking extensions to $bb'$. Then there
  is a proper cycle of types starting with some nonforking extension
  $p'_0(x;b,b')$ of $p_0(x,b)$ whose length is a multiple of $n$.
\end{lemma}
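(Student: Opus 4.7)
The plan is to prove a \emph{lifting lemma}---for each arrow $p(x,b)\pf q(x,b)$ occurring in the cycle and each nonforking extension $p'(x,bb')$ of $p(x,b)$, there is some nonforking extension $q'(x,bb')$ of $q(x,b)$ with $p'(x,bb')\pf q'(x,bb')$---and then iterate the lemma around the cycle, applying pigeonhole on the finite set of nonforking extensions of $p_0$ to $bb'$.

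For the lifting lemma, start with an $\emptyset$-indiscernible sequence $(a_j)_{j\geq 0}$ witnessing $p\pf q$, with $\models p(a_j,b)$ for $j>0$ and $\models q(a_0,b)$. A Ramsey extraction lets me assume in addition that $(a_j)_{j>0}$ is $b$-indiscernible. Fix a realisation $a\models p'$, let $\tau$ be a $b$-automorphism sending $a$ to $a_1$, and set $b^*=\tau(b')$, so $(a_1,b^*)\equiv_b(a,b')$. By the extension axiom, choose $b''$ with $b''\equiv_{ba_1}b^*$ and $b''\ind_{ba_1}(a_j)_{j\neq 1}$. Since $p'$ is nonforking we have $b'\ind_b a$, and thus $b^*\ind_b a_1$; combined with the previous independence, transitivity of forking yields $b''\ind_b(a_j)_{j\geq 0}$. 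In a stable theory, a $b$-indiscernible sequence that becomes independent from additional parameters over $b$ remains indiscernible over the enlarged base, so $(a_j)_{j>0}$ is $bb''$-indiscernible. Consequently all of its members share the common type $\tp(a_1/bb'')$, which by construction is the image of $p'$ under the relabeling $b'\leftrightarrow b''$; meanwhile $a_0\ind_b b''$, so $\tp(a_0/bb'')$ is some nonforking extension of $q$. A $b$-automorphism $\sigma$ sending $b''$ to $b'$ (which exists since $b''\equiv_b b'$) then transforms the sequence into one witnessing $p'\pf q'$ for the corresponding nonforking extension $q'$ of $q$.

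With the lifting lemma in hand, start from any nonforking extension $p'_0$ of $p_0$ to $bb'$ and iteratively produce $p'_0\pf p'_1\pf p'_2\pf\dotsb$ with each $p'_i$ a nonforking extension of $p_{i\bmod n}$ to $bb'$. Since $p'_{in}$ for $i\in\N$ all lie in the finite set of nonforking extensions of $p_0$ to $bb'$, pigeonhole gives $p'_{in}=p'_{jn}$ for some $i<j$. This yields a cycle of length $(j-i)n$; if not already proper, reduce by extracting the first recurrence, and note that whenever $p'_s=p'_t$ the restrictions to $b$ coincide, so $s\equiv t\pmod n$ because the original cycle is proper, whence every sub-cycle has length a multiple of $n$. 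Terminating with a proper cycle of length $kn$, its consecutive indices cover every residue modulo $n$, so some member is a nonforking extension of $p_0$; rotating the cycle to start there, which preserves all $\pf$-edges including the closing one, produces the required proper cycle.

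The main obstacle is the lifting lemma, where one must arrange $b''$ so that three conditions hold simultaneously: the type of $a_1$ over $bb''$ corresponds exactly to the prescribed $p'$; all positive positions share this common type, so the tail truly witnesses an arrow out of $p'$; and $a_0$ remains $\ind_b$-independent from $b''$, so its type lifts to a nonforking extension of $q$. All three hinge on the stable-theory fact that a $b$-indiscernible sequence independent from additional parameters over $b$ remains indiscernible over the enlarged base.
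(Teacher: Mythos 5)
Your argument is correct, but your key lemma is the dual of the paper's. You lift arrows \emph{forward}: given $p\pf q$ and a prescribed nonforking extension $p'$ of the source $p$ to $bb'$, you produce a nonforking extension $q'$ of $q$ with $p'\pf q'$. The paper lifts \emph{backward}: it prescribes the nonforking extension $q'$ of the target, arranges $a_0\models q'$ with the whole witnessing sequence $\ind_b b'$, makes the tail indiscernible over $a_0bb'$ by Ramsey, and simply \emph{reads off} $p':=\tp(a_1/bb')$ from the tail. Because you must make the tail realise a type fixed in advance, you need the extra stable-theory fact that a $b$-indiscernible sequence independent from $b''$ over $b$ stays $bb''$-indiscernible (true, and in fact a consequence of the kernel/Morley-sequence results of Section \ref{S:Kernel}, though you invoke it before it is available in the paper's ordering), together with the automorphism bookkeeping $b^*,b''$; the paper's direction gets by with Ramsey and the existence of nonforking extensions only, which is why it builds its chain ``working backwards''. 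One small presentational point on your side: the Ramsey extraction should be performed over $a_0b$ (as in the paper), not just over $b$, so that the full sequence remains $\emptyset$-indiscernible --- this is what your final automorphism image needs in order to witness $p'\pf q'$; the standard extraction over $a_0b$ gives both properties at once, so nothing breaks. Your endgame (pigeonhole at indices divisible by $n$, the observation that any repetition $p'_s=p'_t$ forces $s\equiv t\pmod n$ by properness of the original cycle, taking a minimal recurrence, and rotating to start at an extension of $p_0$) is essentially the paper's argument, made slightly more explicit about why no repetitions occur at intermediate indices.
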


\begin{proof}
  First notice that, whenever $p(x,b)\pf q(x,b)$ and $q'(x,b,b')$ is a
  non-forking extension of $q(x,b)$, then $p(x,b)$ has a nonforking
  extension $p'(x,b,b')$ with \[p'(x;b,b')\pf q'(x;b,b').\] Indeed,
  consider an indiscernible sequence $(a_i)_{i\in\N}$ such that
  $\models p(a_i,b)$, for $i>0$, and $q(a_0,b)$. We may assume that
  $a_0$ realises $q'(x,b,b')$ and that the sequence $(a_i)_{i\in\N}$
  is independent from $b'$ over $b$. By a Ramsey argument, we
  may assume that the sequence $(a_i)_{i>0}$ is indiscernible over
  $a_0bb'$. Set now $p'(x,b,b')$ to be the type of $a_1$ over $bb'$,
  so $p'(x,b,b')\pf q'(x,b,b')$, as desired.

  Let $k$ now be the number of distinct nonforking extensions of
  $p_0(x,b)$ to $bb'$. Working backwards in the cycle of types, we
  deduce from the above that there is a sequence $r_0(x;b,b')
  \pf\dotsb\pf r_{n\cdot k}(x;b,b')$, where $r_{n\cdot i+j}(x,b,b')$
  is a non-forking extension of $p_j(x,b)$ for each $i\leq k$. Since
  $p_0$ has only finitely many distinct nonforking extensions to
  $bb'$, there are two indices $i<i'$ such that $r_{n\cdot
    i}(x,b,b')=r_{n\cdot i'}(x,b,b')$. Choose $i$ and $i'$ such that
  $0<i'-i$ is least possible. Then \[ r_{n\cdot i}(x;b,b')\pf\dotsb \pf
  r_{n\cdot i'}(x; b,b')\] is a proper cycle of types.
\end{proof}

\begin{cor}\label{C:teq}
  If $T$ is totally transcendental, then it is \icyc if and only if so
  is $T^\mathrm{eq}$.
\end{cor}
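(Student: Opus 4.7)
The direction $T^{\mathrm{eq}}$ \icyc $\Rightarrow$ $T$ \icyc is immediate, since any proper cycle of types over $\emptyset$ in $T$ is also a proper cycle in $T^{\mathrm{eq}}$.

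For the converse I argue by contrapositive, starting from a proper cycle $p_0(x;y) \pf \cdots \pf p_{n-1}(x;y) \pf p_0(x;y)$ of length $n \geq 2$ in $T^{\mathrm{eq}}$ and producing a proper cycle in $T$. The first step is to translate this cycle of types over $\emptyset$ into a cycle $q_0(x, b) \pf \cdots \pf q_{m-1}(x, b) \pf q_0(x, b)$ of complete types over a (possibly imaginary) parameter $b$ realising the common $y$-projection of the $p_i$ (all such projections coincide by indiscernibility of the witnesses obtained from Lemma \ref{L:pf_seq}). The arrows extracted from these witnesses are between specific completions of the partial types $p_i(x, b)$, and a priori they may mismatch at the endpoints. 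Here totally transcendental enters: it guarantees finitely many nonforking completions of each $p_i(x, b)$, and a pigeonhole argument (in the spirit of the proof of Lemma \ref{L:erweiterung}) closes up the mismatched arrows into a genuine proper cycle of complete types over $b$, of length a multiple of $n$.

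Next, apply Lemma \ref{L:erweiterung} with a real tuple $b^*$ containing $b$ in $\mathrm{dcl}^{\mathrm{eq}}(b^*)$. Finite nonforking multiplicity gives a proper cycle over the real parameter $b^*$, but the variable $x$ may still be imaginary. To descend $x$ as well, re-express the cycle as one over $\emptyset$ in two variables, invert it via Remark \ref{R:pf_alg} so that the two variables swap roles, and then convert back to a cycle over a fixed parameter; now that parameter is a realisation of the old $x$-projection, which is imaginary, while the variable is a copy of the real $b^*$. A second application of Lemma \ref{L:erweiterung}, with a real lift of this new imaginary parameter, finally yields a proper cycle in $T$ (a real variable over a real parameter), contradicting $T$ being \icyc. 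The main obstacle is the first step, where totally transcendental is essential for closing up the mismatched completions; the remainder of the plan consists of two clean applications of Lemma \ref{L:erweiterung} bridged by the symmetry of Remark \ref{R:pf_alg}.
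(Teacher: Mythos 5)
Your overall route is the paper's: the forward direction is trivial, and for the converse you lift the imaginary parameter to a real tuple using Lemma \ref{L:erweiterung} (whose hypothesis of finitely many distinct nonforking extensions is exactly what total transcendence supplies), and then use the symmetry of $\pf$ in $x$ and $y$ (Remark \ref{R:pf_alg}) to repeat the procedure on the other variable. That skeleton is correct and is essentially the proof in the paper.

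However, your ``first step'' addresses a difficulty that does not exist, and the way you resolve it is where the write-up goes astray. By Definition \ref{D:Pfeile}, the relation $p_i(x;y)\pf p_{i+1}(x;y)$ between complete $\emptyset$-types is defined through the relation $p_i(x,b)\pf p_{i+1}(x,b)$ for $b$ realising the common $y$-type, and $p_i(x,b)$ is already a \emph{complete} type over $b$ (every formula $\phi(x,y)$ is decided by $p_i$); moreover, since the relation over a parameter is automorphism-invariant and all realisations of the common $y$-type are conjugate, the choice of $b$ is irrelevant. Hence the given proper cycle over $\emptyset$ transfers verbatim to a proper cycle of complete types over any such $b$: there are no completions of partial types, no mismatched endpoints, no pigeonhole, and no use of total transcendence at this point. (Indeed, ``finitely many nonforking completions of a partial type over the same parameter'' is not what total transcendence gives; finite multiplicity concerns nonforking extensions of a complete type to a larger parameter set, which is precisely how it enters Lemma \ref{L:erweiterung}.) The only point you pass over quickly is what happens after each application of Lemma \ref{L:erweiterung}: the lemma yields a proper cycle over the pair $b\,b^{*}$, and one must restrict the types to $b^{*}$ via Corollary \ref{C:pf_f}, the restricted cycle remaining proper because $b\in\mathrm{dcl}^{\mathrm{eq}}(b^{*})$; since you state the dcl condition, this is easily repaired. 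With these corrections your argument coincides with the paper's.
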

\begin{proof}
  We need only show that $T^\mathrm{eq}$ is \icyc, provided that $T$
  is \icyc. Assume first that the type $p(x,e)$ starts a proper cycle
  of types, where $e$ is an imaginary element. Choose a real tuple $b$
  such that $\pi_E(b)=e$ for some $0$-definable equivalence relation
  $E$. Since $T$ is totally transcendental, the type $p(x,e)$ has only
  finitely many nonforking extensions to $\{b,e\}$, so there is a proper
  cycle starting with some nonforking extension $p'(x,b,e)$ of $p(x,e)$, by the
  Lemma \ref{L:erweiterung}. By the Corollary \ref{C:pf_f}, if we restrict
  the types in the cycle to $b$, we have a cycle of types which must
  be proper, because $e$ is definable from $b$.

  Since the relation $\pf$ is symmetric in $x$ and $y$, we can now
  replace $x$ by some real tuple, so $T$ is not \icyc.
\end{proof}

\begin{notation}
  Given stationary types $p_1(x,b)$ and $p_2(x',b)$,  denote by
  $p_1(x,b)\otimes p_2(x',b)$ the type of the pair $(a_1, a_2)$ over
  $b$, where $\models p_i(a_i, b)$, for $i=1, 2$, and $a_1\ind_b a_2$.

  Observe that \[ p_1(x,b)\otimes \Big(p_2(x',b) \otimes p_3(x'',
  b)\Big)= \Big(p_1(x,b)\otimes p_2(x',b) \Big)\otimes p_3(x'', b).\]
\end{notation}

\begin{lemma}\label{L:pf_tensor}
  Given stationary types $p^j(x^j,y^j,c)$ and $q^j(x^j,y^j,c)$ over a
  tuple $c$ in $\aclq(\emptyset)$ such that
  \[ p^j(x^j;y^j,c)\pf  q^j(x^j;y^j,c), \text{ for } j=1,2,\]
  then \[p^1(x^1;y^1,c)\otimes p^2(x^2;y^2,c)\pf q^1(x^1;y^1,c)\otimes
  q^2(x^2;y^2,c).\]
\end{lemma}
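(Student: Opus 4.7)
The plan is to apply Lemma \ref{L:pf_seq} to each hypothesis in order to extract witnessing sequences, arrange them to be independent over $c$, combine them into a single sequence, and then invoke Lemma \ref{L:pf_seq} in the reverse direction to conclude. More precisely, for each $j \in \{1,2\}$, I would use the ``furthermore'' clause of Lemma \ref{L:pf_seq} to pick a sequence $(a^j_i, b^j_i)_{i \in \N}$, indiscernible over $c$, with $\tp(a^j_i, b^j_k / c) = p^j$ for $i < k$ and $\tp(a^j_i, b^j_i / c) = q^j$ for every $i$.

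Next, I would arrange the two sequences to be independent over $c$: after fixing the first sequence, replace the second by a $c$-conjugate realising the nonforking extension of $\tp\big((a^2_i, b^2_i)_{i \in \N}/c\big)$ to $c \cup \{(a^1_i, b^1_i) : i \in \N\}$, available by stability of $T$. The type of the new second sequence over $c$ is preserved, hence the relations between $p^2$ and $q^2$ continue to hold, and now $(a^1_i, b^1_i)_{i \in \N} \ind_c (a^2_i, b^2_i)_{i \in \N}$.

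Setting $a_i := (a^1_i, a^2_i)$ and $b_i := (b^1_i, b^2_i)$, the claim is that the sequence $(a_i, b_i)_{i \in \N}$ satisfies the hypotheses of Lemma \ref{L:pf_seq} for $p^1 \otimes p^2 \pf q^1 \otimes q^2$. For any pair of indices $i \leq k$, the independence of the full sequences over $c$ gives $(a^1_i, b^1_k) \ind_c (a^2_i, b^2_k)$, and each projection realises the correct type ($p^j$ if $i < k$, $q^j$ if $i = k$). Hence $\tp(a_i, b_k / c)$ equals $p^1 \otimes p^2$ when $i < k$ and $q^1 \otimes q^2$ when $i = k$, and Lemma \ref{L:pf_seq} delivers the desired arrow.

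The only genuine subtlety is bookkeeping rather than a real obstacle: since $c \in \aclq(\emptyset)$ and each $p^j, q^j$ is stationary, the tensor products are well-defined complete stationary types over $c$, so the nonforking amalgamation on the right-hand side is unambiguous and the type equalities above hold without any additional verification.
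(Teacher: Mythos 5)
Your proof is correct and follows essentially the same route as the paper's: witnesses for the two arrows $p^j\pf q^j$ are made independent over $c$ and zipped coordinatewise, with stationarity guaranteeing that the combined pairs realise the tensor types. The only notable difference is cosmetic: you phrase both input and output through the sequence characterisation of Lemma \ref{L:pf_seq}, checking the type of each pair $(a_i,b_k)$ over $c$ separately, whereas the paper works directly with Definition \ref{D:Pfeile} (a fixed $b^j$ and an indiscernible sequence of $a^j_i$'s) and instead uses the algebraicity of $c$ together with stationarity to get the two witnessing sequences mutually indiscernible, so that the zipped sequence is itself indiscernible.
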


\noindent By the above, the lemma generalises to an arbitrary finite
product of types.

\begin{proof}
  \noindent For $j=1,2$, choose a tuple $b^j$ and an indiscernible
  sequence $(a^j_i)_{i\in\N}$ such that $\models p^j(a_i^j,b^j,c)$,
  for $i>0$, and $\models q^j(a^j_0,b^j,c)$. We may assume that
  \[ b^1\cup \{a^1_i\}_{i\in\N} \ind_c  b^2\cup\{a^2_i\}_{i\in\N}. \]
  Since $c$ is algebraic over $\emptyset$, the sequences
  $\{a^1_i\}_{i\in\N} $ and $\{a^2_i\}_{i\in\N} $ are both
  indiscernible over $c$ and therefore mutually indiscernible, by
  stationarity of strong types, so $\{a^1_i, a^2_i\}_{i\in\N}$ is
  indiscernible. Notice that $(a_i^1, a_i^2)$ realises
  $p^1(x^1;b^1,c)\otimes p^2(x^2;b^2,c)$ for $i>0$, and $(a^1_0,
  a^2_0)$ realises $q^1(x^1;b^1,c)\otimes q^2(x^2;b^2,c)$, as desired.
\end{proof}
\begin{prop}\label{P:cyc_2}
  If $T$ is totally transcendental, then it is \icyc if and only if
  there is no proper cycle of types in $T^\mathrm{eq}$ of length $2$.
\end{prop}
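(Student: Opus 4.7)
The plan is to prove each direction separately. The forward direction is immediate from Corollary \ref{C:teq}: if $T$ is \icyc then so is $T^\mathrm{eq}$, ruling out proper cycles of any length, in particular of length $2$.

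For the converse, I argue contrapositively. Suppose $T$ admits a proper cycle $p_0(x;y) \pf p_1(x;y) \pf \cdots \pf p_{n-1}(x;y) \pf p_0(x;y)$ of some length $n \geq 2$. The case $n = 2$ is immediate, so assume $n \geq 3$. I first treat the case $n = 2m$ even by forming the $\emptyset$-tensor products
\[
A := p_0(x_0;y_0) \otimes p_2(x_1;y_1) \otimes \cdots \otimes p_{2m-2}(x_{m-1};y_{m-1}),
\]
\[
B := p_1(x_0;y_0) \otimes p_3(x_1;y_1) \otimes \cdots \otimes p_{2m-1}(x_{m-1};y_{m-1}).
\]
Lemma \ref{L:pf_tensor} applied factorwise via $p_{2i} \pf p_{2i+1}$ gives $A \pf B$, and via $p_{2i+1} \pf p_{2i+2}$ gives $B \pf A^\sigma$, where $A^\sigma = p_2 \otimes p_4 \otimes \cdots \otimes p_{2m-2} \otimes p_0$ is a cyclic shift of $A$ in the factor indexing. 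In $T^\mathrm{eq}$, let $\tilde A$ be the imaginary class of the $m$-tuple $((x_0,y_0), \dots, (x_{m-1},y_{m-1}))$ modulo the $\Z/m$-action that cyclically rotates the pairs, and let $\tilde B$ be defined similarly. Since $A^\sigma$-realizations are cyclic shifts of $A$-realizations, they lie in the same $\Z/m$-orbit, so $\widetilde{A^\sigma} = \tilde A$; then Corollary \ref{C:pf_f} applied to the quotient map yields $\tilde A \pf \tilde B \pf \tilde A$ in $T^\mathrm{eq}$. Properness holds because the unordered multisets of coordinate types are $\{p_0, p_2, \dots, p_{2m-2}\}$ and $\{p_1, p_3, \dots, p_{2m-1}\}$, which are disjoint since the cycle is proper, whence $\tilde A \neq \tilde B$.

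For odd $n \geq 3$, I reduce to the even case first. Using that $T$ is totally transcendental, I choose a tuple $b'$ over which $p_0(x,b)$ acquires multiplicity at least $2$; then Lemma \ref{L:erweiterung} yields a proper cycle of length $n \cdot d$ in the nonforking extensions over $bb'$, for some $d \geq 2$. By choosing $b'$ so that the induced cycling of nonforking extensions of $p_0(x,b)$ has period $2$ (that is, $r_n \neq r_0$ while $r_{2n} = r_0$), the extended cycle has even length $2n$, and the even-case construction above delivers a proper length-$2$ cycle in $T^\mathrm{eq}$.

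The main obstacle will be this last reduction: producing a $b'$ that makes the cycling of nonforking extensions of $p_0(x,b)$ genuinely nontrivial. This is where total transcendentality is used essentially, providing both finite multiplicity (so that the pigeonhole step in Lemma \ref{L:erweiterung} is available) and enough flexibility in the choice of $b'$ to force the required period. Once that step is in place, the rest of the argument is the clean combination of tensor products and the cyclic imaginary described above.
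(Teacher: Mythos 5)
Your forward direction is fine, but the converse has several genuine gaps. First, the tensor products you write down are not defined: $\otimes$, and Lemma \ref{L:pf_tensor} with it, are only available for \emph{stationary} types over a tuple $c$ in $\aclq(\emptyset)$, whereas your $p_i$ are arbitrary types over $\emptyset$ (there is no such thing as an ``$\emptyset$-tensor product'' here). This is precisely where total transcendence is needed at the start: one chooses $c\in\aclq(\emptyset)$ over which all nonforking extensions of the $p_i$ are stationary and uses Lemma \ref{L:erweiterung} to replace the given cycle by a proper cycle of stationary types over $c$; you skip this step entirely. Second, your imaginary $\tilde A$, the orbit of the tuple of \emph{pairs} $((x_0,y_0),\dots,(x_{m-1},y_{m-1}))$ under simultaneous cyclic rotation, is the image under a map depending on $x$ and $y$ jointly. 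Corollary \ref{C:pf_f} only covers separate functions $f(x)$ and $g(y)$, and a joint function need not preserve $\pf$, since the witnessing indiscernible sequence is not indiscernible over the parameter. Worse, your $\tilde A,\tilde B$ have no parameter side left at all, and in a stable theory every indiscernible sequence is an indiscernible set, so two \emph{distinct} parameter-free types can never satisfy $\tilde A\pf\tilde B$: the proposed $2$-cycle cannot exist in that form. If you repair this by coding the $x$-coordinates and the $y$-coordinates separately (as the paper does, with unordered sets), then Corollary \ref{C:pf_f} applies, but the pairing between the two sides is lost, and your one-line properness argument via disjoint multisets of coordinate types no longer suffices: equality of the two imaginary types only produces bijections $\rho,\tau$, possibly different on the two sides, matching coordinates, and excluding that is the delicate part of the paper's proof, which uses Remark \ref{R:pf_alg} together with stationarity over $c$.

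Third, the reduction of odd $n$ to even length does not work. You cannot in general choose $b'$ over which $p_0(x,b)$ has multiplicity at least $2$ (if $p_0(x,b)$ is stationary, no such $b'$ exists), and even when the multiplicity is finite, Lemma \ref{L:erweiterung} gives a proper cycle whose length is \emph{some} multiple of $n$, with no control over which one; nothing forces the ``period'' to be $2$. The parity split is in any case unnecessary: the paper's construction tensors the $k-1$ \emph{consecutive} types $p_0\otimes\dots\otimes p_{k-2}$ against its shift by one, which works uniformly for every length $k\geq 2$ and, after passing to the two unordered-set imaginaries (one per side), yields the desired proper cycle of length $2$ in $T^{\mathrm{eq}}$.
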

\begin{proof}
  By Corollary \ref{C:teq}, we need only prove one direction, so
  suppose \[ p_0(x,y)\pf\dotsb\pf p_{n-1}(x,y)\pf p_0(x,y)\] is a
  proper cycle of types with real variables. Since $T$ is totally
  transcendental, there is a finite tuple $c$ in $\aclq(\emptyset)$
  such that all nonforking extensions of all $p_i$'s to $c$ are
  stationary. Lemma \ref{L:erweiterung} gives a proper cycle of
  stationary types
  \[ p_0(x;y,c)\pf\dotsb\pf p_{k-1}(x;y,c)\pf p_0(x;y,c)\] for some $k$ in $\N$.

  Denote by $\bar x=(x^0,\ldots, x^{k-2})$ and $\bar y=(y^0,\ldots,
  y^{k-2})$ and consider the types
  \[ \begin{aligned}
    r_1( \bar x; \bar y ,c) &= p_0(x^0,y^0, c)\otimes p_1(x^1,y^1, c)
    \otimes \ldots \otimes p_{k-2}(x^{k-2}, y^{k-2}, c)\\
    r_2(\bar x; \bar y,c) &= p_1(x^0,y^0, c)\otimes p_2(x^1,y^1, c)
    \otimes \ldots \otimes p_{k-1}(x^{k-2}, y^{k-2}, c)\\
    r_3(\bar x; \bar y,c) &= p_1(x^0,y^0, c)\otimes p_2(x^1,y^1, c)
    \otimes \ldots \otimes p_{k-2}(x^{k-3}, y^{k-3}, c) \otimes
    p_0(x^{k-2}, y^{k-2}, c)\\
 \end{aligned}\]

  \noindent The Lemma \ref{L:pf_tensor} yields the cycle of types \[
  r_1(\bar x; \bar y,c)\pf r_2(\bar x; \bar y,c)\pf r_3(\bar x; \bar
  y,c).\]

  Given $(\bar a, \bar b)$ realising $r_1(\bar x; \bar y ,c)$ and
  $(\bar a', \bar b')$ realising $r_2(\bar x; \bar y ,c)$, notice
  that \[ (a^1, a^2, \ldots, a^{k-2}, a^0, b^1, b^2, \ldots, b^{k-2},
  b^0)\] realise $r_3(\bar x;\bar y, c)$. If $f$ denotes the function
  which maps a $k-1$-tuple $(f^0, \ldots, f^{k-1})$ to the imaginary
  coding the set $\{f^1,\ldots, f^{k-1}\}$, Corollary \ref{C:pf_f}
  implies that
  \begin{multline*} \tp(\{ a^0, \ldots, a^{k-2}\};
    \{ b^0, \ldots, b^{k-2}\}, c )\pf \tp(\{ a^1, \ldots, a^{k-1}\};
    \{ b^1, \ldots, b^{k-1}\}, c ) \pf \\ \pf \tp( \{ a^0, \ldots,
    a^{k-2} \}; \{ b^0, \ldots, b^{k-2}\}, c ) \end{multline*}
  \noindent In order to conclude, we need only show that the above two
  imaginary types are different. Otherwise, if the two types are
  equal, we have for each $1\leq i\leq k-1$, two values $0\leq
  \rho(i), \tau(i)\leq k-2$ such that $(a^{\rho(i)},
  b^{\tau(i)})\models p_i(x, y, c)$. Observe that no two elements
  $a^i$ and $a^j$, with $i\neq j$,  can be equal since the independence
  $a^i\ind_c a^j$ would imply that $a_i$ is algebraic, and thus
  $p_{i+1}(x,y, c)=p_i(x,y,c)$, by the Remark \ref{R:pf_alg}.
  Likewise, no two elements $b^i$ and $b^j$ can be equal, for $i\neq
  j$. Thus, each of the maps $i\mapsto \rho(i)$ and $i \mapsto
  \tau(i)$ is injective, so both $\rho$ and $\tau$ are bijections.

  If $\rho(k-1)=\tau(k-1)=j$, then $(a^j, b^j)$ realises both
  $p_j(x;y,c)$ and $p_{k-1}(x;y,c)$, which contradicts that the cycle
  of types is proper. Hence, the values $\rho(k-1)$ and $\tau(k-1)$
  are different, so there must be some $1\leq i\leq k-2$ such that
  $\rho(i)\neq \tau(i)$. The independences \[ a^{\rho(i)} \ind_c
  b^{\tau(i)} \text { and } a^{\rho(k-1)} \ind_c b^{\tau(k-1)}\]
  imply
  that $p_i(x,y, c)=p_{k-1}(x,y,c)$, by the Remark \ref{R:pf_alg} and
  stationarity of strong types, which yields the desired
  contradiction.

\end{proof}

We do not know whether Corollary \ref{C:teq} and Proposition
\ref{P:cyc_2} are true for arbitrary stable theories.

All known examples of non-equational stable theories have a proper
cycle of real types of length $2$. Indeed, in Hrushovski and Srour's
primordial example \cite{HS89}, the type of a white point and the type
of a red point in a plane indiscernibly converge to each other,
whereas the non-abelian free group satisfies the MS-criterion
\cite[Lemmata 3.4 \& 3.6]{MS17}. In this note, we will provide new
examples of non-equational totally transcendental theories, one for
each natural number $k$, having proper cycles of length $k$ but no
proper cycles of real types of length strictly smaller than $k$. We
will do so by suitable colouring the free pseudospace, mimicking the
construction of Hrushovski and Srour. The following question seems
hence natural, though we do not have a solid guess what the answer
will be.

\begin{question}
  Is there a non-equational \icyc theory?
\end{question}

Related to the above, we wonder whether there is a local
characterisation of equationality in terms of cycles of types:

\begin{question}
  Is a formula $\varphi(x,y)$ a Boolean combination of equations if
  and only if whenever \[ \varphi \in p_0(x,y)\pf p_1(x,y) \pf\ldots
  \pf p_{n-1} (x,y)\pf p_0(x,y),\] then $\varphi$ belongs to $p_i$ for
  every $i>0$?

  Do two types $ p$ and $q$ contain the exact same equations if and
  only if $p$ and $q$ both occur in a (proper) cycle of types?
\end{question}

Observe that a positive answer to the second question would positively
answer the first one.
\section{Indiscernible Kernels}\label{S:Kernel}

To our knowledge, the results in this section only appeared in print
form in Adler's Master's Thesis \cite{hA96} (in German). Therefore, we
will include their proofs, even if the results are most likely
well-known among the community.

As before, work inside a sufficiently saturated model $\mathbb U$ of
the complete theory $T$.
\begin{notation}
  Given two subsets $I_0$ and $I_1$ of a linearly ordered infinite
  index set with no endpoints, we write $I_0\ll I_1$ if $i_0<i_1$ for
  all $i_0$ in $I_0$ and $i_1$ in $I_1$. If $(a_i)_{i\in I}$ is a
  sequence indexed by $I$, set $\aclq(a_{I_0})=\acl(\{a_i\}_{i\in
    I_0})$.
\end{notation}

\begin{definition}\label{D:Kern}
  The \emph{kernel} of the indiscernible sequence $(a_i)_{i\in I}$ is
  defined as
  \[ \Ker((a_i)_{i\in I}) =\bigcup\limits_{\stackrel{I_0, I_1
    \subset I}{I_0\ll I_1}} \aclq(a_{I_0})\cap \aclq(a_{I_1}). \]
\end{definition}

\noindent Note that we may assume that both $I_0$ and $I_1$ are finite
subsets of $I$. Furthermore, the set $\aclq(a_{I_0})\cap
\aclq(a_{I_1})$ only depends on $|I_0|$ and $|I_1|$ (possibly after
enlarging $I$), since $(a_i)_{i\in J}$ is indiscernible
over $a_{I_0}$, whenever the segment $J\subset I\setminus I_0$
is either left of
$I_0$ or right of $I_0$. If the sequence is indiscernible as a set
(which is
always the case in stable theories), then we may define the kernel by
considering all the intersections given by pairs $(I_0, I_1)$ with
$I_0\cap I_1=\emptyset$.

\noindent Observe that (if $I$ is large enough),
\[ \Ker((a_i)_{i\in I}) = \aclq(a_{I_0})\cap \aclq(a_{I_1}),
\text{ for any } I_0\ll I_1 \text{ both infinite}.\]

\begin{lemma}\label{L:Ker_large}
  The kernel $K$ of an indiscernible sequence $(a_i)_{i\in I}$ is the
  largest subset of $\aclq((a_i)_{i\in I} )$ over which the sequence
  is indiscernible.
\end{lemma}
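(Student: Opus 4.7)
The plan is to prove two inclusions: first, every subset $A \subseteq \aclq((a_i)_{i\in I})$ over which the sequence is indiscernible must satisfy $A \subseteq K$; and second, the sequence is itself indiscernible over $K$. Together these will show that $K$ is the largest such subset.

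For the first (easier) inclusion, I would fix $a \in A$ and pick a finite $F \subseteq I$ with $a \in \aclq(a_F)$. By hypothesis, $(a_i)_{i\in I}$ is indiscernible over $a$, so for any finite increasing $F' \subseteq I$ with $|F'| = |F|$ there is an automorphism in $\mathrm{Aut}(\mathbb U /a)$ mapping $a_F$ to $a_{F'}$. Hence $a = \sigma(a) \in \aclq(a_{F'})$ for every such $F'$; choosing $F'$ with $F \ll F'$ gives $a \in \aclq(a_F) \cap \aclq(a_{F'}) \subseteq K$.

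For the second inclusion, it suffices to show that $(a_i)_{i\in I}$ is indiscernible over each finite tuple $\bar k$ drawn from $K$. By the observation following Definition \ref{D:Kern}, after enlarging $I$ if necessary I can pick finite $F_0 \ll F_1$ with $\bar k \in \aclq(a_{F_0}) \cap \aclq(a_{F_1})$, and the sliding invariance there allows me to shift $F_0$ or $F_1$ freely to other finite increasing tuples of the same size on either side while keeping $\bar k$ in the intersection. Given two increasing $n$-tuples $\bar i, \bar j$ in $I$, I would slide so that $F_0 \ll \bar i, \bar j \ll F_1$. The segment of $(a_i)$ lying between $F_0$ and $F_1$ is indiscernible over $a_{F_0 \cup F_1}$ (by indiscernibility of the full sequence over $\emptyset$), giving $a_{\bar i} \equiv_{a_{F_0 \cup F_1}} a_{\bar j}$.

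The hard part will be upgrading this to indiscernibility over $\bar k$, since $\bar k$ lies only in $\aclq(a_{F_0 \cup F_1})$, not in $\mathrm{dcl}^{\mathrm{eq}}(a_{F_0 \cup F_1})$. An automorphism $\sigma$ fixing $a_{F_0 \cup F_1}$ pointwise with $\sigma(a_{\bar i}) = a_{\bar j}$ need not fix $\bar k$; it merely sends $\bar k$ to another element of its finite orbit over $a_{F_0 \cup F_1}$, all of which still lie in $\aclq(a_{F_0}) \cap \aclq(a_{F_1}) \subseteq K$. My plan to circumvent this is to exploit the sliding invariance a second time: enlarge $F_0$ and $F_1$ to larger finite $F_0^+, F_1^+$ keeping $\bar k \in \aclq(a_{F_0^+}) \cap \aclq(a_{F_1^+})$, and then, using the boundedness of the orbit of $\bar k$ together with a pigeonhole across many such enlargements, precompose $\sigma$ with an automorphism in $\mathrm{Aut}(\mathbb U / a_{F_0^+ \cup F_1^+}\, a_{\bar j})$ that inverts $\sigma$'s action on $\bar k$ while preserving $a_{\bar j}$. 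Establishing the existence of this corrective automorphism—crucially, that $\bar k$ and $\sigma(\bar k)$ remain conjugate even after adjoining $a_{\bar j}$ to the base—is the technical crux of the proof, and is where the full strength of the sliding invariance of $K$ is used.
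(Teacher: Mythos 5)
Your first inclusion is fine and is in fact the whole of the paper's written argument for this lemma: the paper proves exactly that automorphism step and disposes of the other direction with ``Clearly'' (in effect deferring to Adler's thesis). The genuine gap in your proposal is therefore located exactly where you planted your flag, and it is not a deferrable technicality: the existence of the ``corrective automorphism'', i.e.\ the claim that $\bar k$ and $\sigma(\bar k)$ are conjugate over $a_{F_0^+\cup F_1^+}\,a_{\bar j}$, is essentially a restatement of what is to be proved, and nothing in the sketch establishes it. Enlarging $F_0,F_1$ to $F_0^+,F_1^+$ does not help by itself: any automorphism fixing $a_{F_0^+\cup F_1^+}$ and $a_{\bar j}$ still only stabilises the finite set of conjugates of $\bar k$ over that base setwise, so you meet the identical orbit problem one level up; and the ``pigeonhole across many such enlargements'' is never specified --- what is counted, and which invariance forces the counted sets to coincide rather than merely to be conjugate, is precisely the missing mathematics. (Note also that the ``sliding invariance'' you quote from the remark after Definition \ref{D:Kern} is itself a statement of the same kind; you may cite it since the paper asserts it, but it cannot be what supplies the corrective automorphism.)

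Here is what a completion looks like, and it shows your pigeonhole instinct is right but must be aimed at positions of a moving index, not at enlargements of the anchors. After extending the index order (harmless: the kernel only grows and indiscernibility passes to subsequences), reduce to showing that $\tp(\bar k, a_{\bar u}, a_l, a_{\bar w})$ is unchanged when the single index $l$ is moved while $\bar u,\bar w$ stay fixed and contain a block over which $\bar k$ is algebraic (such blocks are everywhere by the sliding remark). For each admissible position $l'$ let $g(l')$ be the set of realisations of this fixed, algebraic-in-$x$, complete type-pattern at the parameters $a_{\bar u},a_{l'},a_{\bar w}$. Every $g(l')$ is a subset of the finite set of realisations of $\tp(\bar k/a_{\bar u}a_{\bar w})$, and indiscernibility of the $a$'s alone makes $|g(l')|$ and $|g(l')\cap g(l'')|$ independent of the positions. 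Pigeonhole over infinitely many positions yields two equal sets $g(l')=g(l'')$; hence the constant value of $|g(l')\cap g(l'')|$ equals the constant value of $|g(l')|$, so all the sets $g(l')$ coincide, and since $\bar k\in g(l)$ the tuple $\bar k$ itself --- not merely its orbit --- keeps its type under the move. Without this counting step (or an equivalent, as in Adler's thesis), your argument stops short of the statement at exactly the point you call the crux.
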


\begin{proof}
  We may assume that $I$ has no endpoints. Clearly, the sequence is
  indiscernible over $K$. Given a tuple $b$ in $\aclq(a_{I_0})$, for
  $I_0\subset I$ finite, such that the sequence is indiscernible over
  $b$, the tuple $b$ also lies in $\aclq(a_{I_1})$, whenever the
  segment $I_1\gg I_0$ has the same size as $I_0$,  so
  $b$ lies in the kernel \[K=\bigcup\limits_{\stackrel{J_0, J_1
  		\subset I}{J_0\ll J_1}} \aclq(a_{J_0})\cap \aclq(a_{J_1}).\]
\end{proof}

\begin{lemma}\label{L:Kern_small}
  If $T$ is stable, then the kernel $K$ of an indiscernible sequence
  $(a_i)_{i\in I}$ is the smallest algebraically closed subset (in
  $T^\mathrm{eq}$) over which the sequence is independent.
\end{lemma}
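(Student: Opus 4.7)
The plan is to establish three things: that $K$ is algebraically closed, that $(a_i)_{i\in I}$ is independent over $K$, and that $K\subseteq B$ for any algebraically closed set $B$ over which the sequence is independent. Once $I$ is large enough (which, by indiscernibility, we may arrange by extending the sequence), the union in Definition \ref{D:Kern} stabilises: $K=\aclq(a_{I_0})\cap \aclq(a_{I_1})$ for any two infinite sets $I_0\ll I_1$, so $K$ is automatically algebraically closed.

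For the lower bound, I take two disjoint infinite segments $I_0\ll I_1$ of $I$. Independence of the sequence over $B$ gives $a_{I_1}\ind_B a_{I_0}$, and the standard intersection property of non-forking in a stable theory yields $\aclq(B\cup a_{I_0})\cap \aclq(B\cup a_{I_1})=\aclq(B)=B$. Any element of $K=\aclq(a_{I_0})\cap \aclq(a_{I_1})$ lies a fortiori in this intersection, hence in $B$, so $K\subseteq B$.

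For the harder direction, that $(a_i)$ is independent over $K$, my strategy is to go through the canonical base of the sequence. In a stable theory the global non-forking extension $p$ of the average type of $(a_i)$ is well-defined, and its canonical base $C=\operatorname{Cb}(p)$ enjoys two standard properties: $(a_i)_{i\in I}$ is a Morley sequence over $C$, and $C\subseteq \aclq(a_J)$ for every infinite $J\subseteq I$. The second property immediately gives $C\subseteq K$.

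To promote the Morley property from $C$ to $K$, let $A_0\ll A_1$ be finite subsets of $I$ and, after extending the sequence to the left if necessary, fix an infinite segment $I_-\ll A_0$ of $I$, so that $K\subseteq \aclq(a_{I_-})$. Morley-ness over $C$ gives $a_{A_1}\ind_C a_{I_-}a_{A_0}$; since $K\cup a_{A_0}\subseteq \aclq(C\cup a_{I_-}\cup a_{A_0})$, monotonicity upgrades this to $a_{A_1}\ind_C K\cup a_{A_0}$. Decomposing the right-hand side via transitivity of non-forking, we obtain $a_{A_1}\ind_{C\cup K} a_{A_0}$, and since $C\subseteq K$ we have $\aclq(C\cup K)=K$, giving $a_{A_1}\ind_K a_{A_0}$. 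Combined with the indiscernibility of $(a_i)$ over $K$ from Lemma \ref{L:Ker_large}, this says $(a_i)$ is Morley, and in particular independent, over $K$. The principal obstacle is precisely this last step: monotonicity of non-forking does not permit enlarging the base directly, so one must route the inclusion $C\subseteq K\subseteq \aclq(a_{I_-})$ through transitivity, exploiting that a long enough left segment of the sequence already contains $K$ in its algebraic closure.
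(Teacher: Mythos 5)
Your proof is correct, and its minimality half ($K\subseteq B$ via $a_{I_0}\ind_B a_{I_1}$ and the intersection property of algebraic closures of independent sets) is exactly the paper's argument. For the harder half you share the paper's crucial ingredient, namely that $C=\mathrm{Cb}(\mathrm{Av}((a_i)_{i\in I}))$ is contained in $K$, but the execution differs. The paper verifies the Morley property over $K$ directly: for a formula of $\mathfrak p\restr{K\cup(a_j)_{j<i}}$, finite satisfiability of the average type in the tail together with indiscernibility over $K$ (Lemma \ref{L:Ker_large}) shows that $a_i$ realises $\mathfrak p\restr{K\cup(a_j)_{j<i}}$, so nothing beyond the definition of the average type and stationarity of $\mathfrak p\restr K$ is needed. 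You instead quote as standard that the sequence is a Morley sequence over $C$ and then change the base from $C$ to $K$ by routing through an infinite segment $I_-$ with $K\subseteq\aclq(a_{I_-})$, using monotonicity, base monotonicity and finite character; this buys a proof that never invokes stationarity, at the price of two extra inputs: the quoted fact about canonical bases of average types (whose usual proof is essentially the paper's computation, run over $C$ instead of $K$), and the behaviour of the kernel under extending the sequence. On the latter point you should add a word: when $I_-$ consists of newly prepended elements, the original $K$ does lie in $\aclq(a_{I_-})$ because the kernel can only grow when the sequence is extended and the stabilised description $K^{\mathrm{ext}}=\aclq(a_{I_-})\cap\aclq(a_{I_+})$ applies to the extended sequence; alternatively, since in a stable theory the sequence is an indiscernible set, you could place the infinite auxiliary segment to the right of $A_1$ and avoid extending the sequence altogether. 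With that remark supplied, your chain $a_{A_1}\ind_C a_{I_-}a_{A_0}$, hence $a_{A_1}\ind_C K\cup a_{A_0}$, hence $a_{A_1}\ind_{K}a_{A_0}$, is correct forking calculus, and combined with $K$-indiscernibility it yields the Morley property over $K$ just as the paper's direct computation does.
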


\begin{proof}
  Let $E$ be an algebraically closed subset (in $T^\mathrm{eq}$) such
  that $(a_i)_{i\in I}$ is $E$-independent. In particular, for each
  $I_0<I_1$, we have that
  \[ a_{I_0}\ind_Ea_{I_1},\]
  \noindent so $K\subset E$.

  \noindent Let now $\mathfrak{p}=\mathrm{Av}((a_i)_{i\in I})$ be the
  average type, that is,
  \[ \mathfrak p=\{ \varphi(x, b)\ \LL_{\mathbb U}\text{-formula}\ \mid \
  \varphi(a_i, b) \text{ for all but finitely many } i \in I \}.\]

  \noindent Since $\mathfrak p$ is invariant over every infinite
  subsequence of $(a_i)_{i\in I}$, its canonical base $C$ is contained
  in $K$. Thus, the sequence is $C$-indiscernible and $\mathfrak p$ is
  a nonforking extension of the stationary type $\mathfrak p\restr K$.

  \noindent It suffices to show that $a_i\models \mathfrak
  p\restr{K\cup (a_j)_{j<i}}$, since any Morley sequence of $\mathfrak
  p\restr K$ has this property and its type over $K$ is unique. Thus,
  let $\varphi(x, (a_j)_{j<i})$ be a formula in $\mathfrak
  p\restr{K\cup (a_j)_{j<i}}$. We may clearly assume that $I$ has no
  last element. By definition of the average type, there is some
  $a_t\models \mathfrak p\restr{K\cup (a_j)_{j<i}}$ with $t\geq i$. By
  indiscernibility,
  \[a_i\models \mathfrak p\restr{K\cup (a_j)_{j<i}},\]  as desired.
 \end{proof}

\begin{cor}\label{C:Kern_MF}
  In a stable theory $T$, every indiscernible sequence is a Morley
  sequence over its kernel.
\end{cor}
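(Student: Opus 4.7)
The plan is to show that the ingredients assembled in Lemmas \ref{L:Ker_large} and \ref{L:Kern_small} already give exactly what is required to call the sequence a Morley sequence over its kernel. Recall that by definition, a Morley sequence over a set $K$ is a sequence $(a_i)_{i\in I}$ which is $K$-indiscernible, all of whose elements realise a common stationary type $\mathfrak{p}$ over $K$, and which satisfies $a_i\ind_K (a_j)_{j<i}$ for every $i\in I$.

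First, I would invoke Lemma \ref{L:Ker_large} to conclude that $(a_i)_{i\in I}$ is indiscernible over the kernel $K$, so in particular all the $a_i$'s share a common type $q$ over $K$. Next, I would re-read the proof of Lemma \ref{L:Kern_small}, where the average type $\mathfrak{p}=\mathrm{Av}((a_i)_{i\in I})$ was considered: since $\mathfrak{p}$ is invariant under any automorphism fixing an infinite tail of the sequence, its canonical base $C$ must lie in $\aclq(a_{I_0})\cap \aclq(a_{I_1})$ for every cofinal pair $I_0\ll I_1$, hence $C\subset K$. Because $K$ is algebraically closed in $T^{\mathrm{eq}}$, this forces $\mathfrak{p}\restr K$ to be stationary, and $q=\mathfrak{p}\restr K$ after identifying the common type of the elements with the average type restricted to $K$.

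Finally, Lemma \ref{L:Kern_small} yields the independence statement $a_{I_0}\ind_K a_{I_1}$ for all disjoint subsets with $I_0\ll I_1$, so in particular $a_i\ind_K (a_j)_{j<i}$ for every $i\in I$. Combined with the stationarity of $\mathfrak{p}\restr K$, this means that each $a_i$ realises the unique nonforking extension of $\mathfrak{p}\restr K$ to $K\cup (a_j)_{j<i}$, which is precisely the defining property of a Morley sequence. Assembling these three observations completes the proof.

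There is no real obstacle here: the corollary is essentially a repackaging of the two preceding lemmas, the only non-cosmetic point being the observation that the canonical base of the average type lies in the kernel, which is already contained in the proof of Lemma \ref{L:Kern_small} and simply needs to be highlighted.
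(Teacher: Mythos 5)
Your proof is correct and is essentially the paper's own argument: the corollary is stated without a separate proof precisely because it follows immediately from Lemma \ref{L:Ker_large} and Lemma \ref{L:Kern_small}, whose proof already establishes $a_i\models \mathfrak p\restr{K\cup (a_j)_{j<i}}$ via the canonical base of the average type lying in $K$. Your assembly (indiscernibility over $K$, stationarity of $\mathfrak p\restr K$ since $K$ is algebraically closed in $T^{\mathrm{eq}}$, and $K$-independence) is exactly that repackaging.
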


Using kernels, we can provide a different characterisation of the
relation $\pf$ in a stable theory.

\begin{cor}\label{C:Kern_Pf}
  Given types $p(x;y)$ and $q(x;y)$ in a stable theory $T$, we have
  that $p\pf q$ if and only if there is a set $C$ and tuples $a$, $a'$
  and $b$ such that:
  \begin{itemize}
  \item $\models p(a,b)$ and $\models q(a',b)$;
  \item $a\stackrel{\mathrm stp}{\equiv}_C a'$, and
  \item $a\ind_C b$.
  \end{itemize}
  \noindent In particular, given a cycle
  \[ p_0(x,y)\pf p_1(x,y)\pf \ldots \pf
  p_{n-1}(x,y)\pf p_0(x,y),\] there are tuples $b$, $a_0, \ldots, a_n$
  and subsets $C_0,\ldots, C_{n-1}$ such that:
  \begin{itemize}
  \item $\models p_r(a_r,b)$, for $0\leq r\leq n-1$, and $\models
    p_0(a_n,b)$.
  \item $a_r\stackrel{\mathrm stp}{\equiv}_{C_r} a_{r+1}$ for $0\leq
    r\leq n-1$.
  \item $a_r\ind_{C_r} b$ for $0\leq r\leq n-1$.
  \end{itemize}
\end{cor}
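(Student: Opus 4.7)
The plan is to first handle the single-arrow equivalence, and then derive the cycle statement by iterating it.

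\emph{Forward direction.} Given $p\pf q$, I invoke Lemma~\ref{L:pf_seq} to obtain an indiscernible sequence $(a_i,b_i)_{i\in\Z}$ with $\models p(a_i,b_j)$ for $i<j$ and $\models q(a_i,b_i)$ for every $i$. The idea is to let $C$ be the kernel $K$ of this sequence. By Corollary~\ref{C:Kern_MF} the sequence is Morley over $K$, so $(a_{-1},b_{-1})\stackrel{\mathrm stp}{\equiv}_K (a_0,b_0)$ and $(a_{-1},b_{-1})\ind_K(a_0,b_0)$. Setting $a:=a_{-1}$, $a':=a_0$, $b:=b_0$ and $C:=K$, all four conditions follow: $\models p(a,b)$ because $-1<0$, $\models q(a',b)$ by hypothesis, and $a\stackrel{\mathrm stp}{\equiv}_C a'$ together with $a\ind_C b$ by projecting the Morley data onto the first coordinate.

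\emph{Reverse direction.} This is the heart of the argument and the main obstacle: naively prepending $a'$ to a $Cb$-Morley sequence does not yield indiscernibility, because $a'$ and the later $a_i$'s have different types over $Cb$. The remedy is to Morleyise over $Ca'b$ instead. After replacing $C$ by $\aclq(C)$, the strong type $\tp(a/C)=\tp(a'/C)$ is stationary and $\tp(a/Cb)$ is its unique nonforking extension. Let $(a_i)_{i\geq 1}$ be a Morley sequence over $Ca'b$ realising the nonforking extension of $\tp(a/Cb)$ to $Ca'b$, and put $a_0:=a'$. Each $a_i$ with $i\geq 1$ satisfies $\models p(a_i,b)$ by construction, and $\models q(a_0,b)$ is given. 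A short transitivity argument ($a_i\ind_{Cb}a'$ from the Morley construction and $a_i\ind_C b$ from stationarity combine to give $a_i\ind_C\{a',a_1,\dots,a_{i-1}\}$) upgrades $(a_i)_{i\geq 0}$ to a $C$-Morley sequence in $\tp(a/C)$, which is $\emptyset$-indiscernible, yielding $p\pf q$ via Definition~\ref{D:Pfeile}.

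\emph{The cycle.} Any arrow $p\pf q$ forces $p$ and $q$ to coincide on their $y$-restriction, so the types $p_0,\dots,p_{n-1}$ share one $y$-type $s(y)$. I fix a realisation $b$ of $s$ and some $a_0\models p_0(x,b)$, and then iterate: given $a_r\models p_r(x,b)$, apply the first part of the corollary to the arrow $p_r\pf p_{r+1}$ (indices modulo $n$) to obtain abstract witnesses $(\tilde a,\tilde a',\tilde b,\tilde C)$, and transport them via a $\emptyset$-automorphism sending $(\tilde a,\tilde b)$ to $(a_r,b)$ (possible since $p_r$ is a complete $\emptyset$-type); the images of $\tilde a'$ and $\tilde C$ become $a_{r+1}$ and $C_r$. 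The $n$-th step produces $a_n\models p_0(x,b)$, completing the required data.
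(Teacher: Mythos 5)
Your proof is correct, and its overall strategy is the paper's: the kernel/Morley-sequence machinery of Section \ref{S:Kernel} for the forward implication, stationarity of strong types over $\aclq(C)$ for the converse, and automorphism transport for the cycle (which the paper leaves implicit in the ``in particular''). The one genuine difference is in the forward direction. The paper stays with the form of Definition \ref{D:Pfeile}: it fixes $b$, takes an indiscernible sequence $(a_i)_{i<|T|^+}$ with $q(a_0,b)$ and $p(a_i,b)$ for $i>0$, passes to the kernel $K$ of that sequence, and then still has to extract some index $i_0$ with $a_{i_0}\ind_K b$ by a boundedness argument along the long Morley sequence. You instead invoke Lemma \ref{L:pf_seq} so that $b$ itself is a coordinate of the terms of an indiscernible pair sequence; after Corollary \ref{C:Kern_MF}, both $a_{-1}\stackrel{\mathrm stp}{\equiv}_K a_0$ and $a_{-1}\ind_K b_0$ fall out of a single application of the Morley property over the kernel, with no long sequence and no pigeonhole --- a slightly tidier route to the same place. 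Your converse is the paper's argument with a harmless detour: the paper directly chooses $a_i\stackrel{\mathrm stp}{\equiv}_C a$ with $a_i\ind_C b,\{a_j\}_{j<i}$, which is exactly the $C$-Morley sequence you obtain after collapsing your sequence built over $Ca'b$; note that Definition \ref{D:Pfeile} never asks for indiscernibility over $b$, so the ``obstacle'' you flag is only apparent, though your fix is sound. Your iteration for the cycle, transporting the abstract witnesses onto the fixed pair $(a_r,b)$ by a $\emptyset$-automorphism, is exactly the intended reading of the statement.
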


\begin{proof}
  If $p\pf q$, choose some tuple $b$ and an indiscernible sequence
  $(a_i)_{i<|T|^+}$ such that $q(a_0, b)$ and $p(a_i, b)$ for each
  $i>0$. Consider the kernel $K$ of the sequence, which is
  algebraically closed in $T^\mathrm{eq}$, so $a_i \stackrel{\mathrm
    stp}{\equiv}_K a_j$ for all $i, j$. In particular, the subsequence
  $(a_i)_{0<i<|T|^+}$ is Morley sequence over $K$, so there is some
  $i_0<|T|^+$ such that $a_{i_0}\ind_K b$. Set $C=K$, $a'=a_0$ and
  $a=a_{i_0}$.

  \noindent For the other direction, set $a_0=a'$ and choose for each
  $0\neq i$ in $\mathbb N$ a realisation $a_i\stackrel{\mathrm
    stp}{\equiv}_C a$ such that $a_i\ind_C b, \{a_j\}_{j<i}$. Since
  strong types are stationary, we have that $p(a_i, b)$, for $i\neq
  0$. Furthermore, the sequence $\{a_i\}_{i\geq 0}$ is indiscernible
  over $C$, by construction.
\end{proof}

The above provides a simpler characterisation of equations in stable
theories (cf. \cite[Remark 2.4]{eH12}).

\begin{remark}\label{R:Hrushovski}
  In a stable theory $T$, a formula $\varphi(x;y)$ is an equation if
  and only if for every set set $C$ and tuples $a$, $a'$ and $b$ such
  that $\varphi(a,b)$ holds with $a\ind_C b$, then so does
  $\varphi(a',b)$ hold, whenever $a'\stackrel{\mathrm stp}{\equiv}_C
  a$.
\end{remark}

\begin{proof}
  Given $C$, $a$, $a'$ and $b$ as in the statement, Corollary
  \ref{C:Kern_Pf} yields that $\tp(a,b)\pf \tp(a', b)$. As $\varphi$
  belongs to $\tp(a,b)$, it must lie in $\tp(a',b)$, by Corollary
  \ref{C:pf_cl}.

  For the other direction, it suffices to show that $\varphi$ lies in
  $q$, whenever $\varphi$ belongs to $p$ and $p\pf q$, by Corollary
  \ref{C:pf_cl}. By Corollary \ref{C:Kern_Pf}, there are $C$, $a$,
  $a'$ and $b$ such that $p(a,b)$, $q(a',b)$, $a\ind_C b$ and
  $a'\stackrel{\mathrm stp}{\equiv}_C a$. Since $\varphi(a,b)$ holds,
  we conclude that so does $\varphi(a',b)$, that is, the formula
  $\varphi$ belongs to $q$, as desired.
\end{proof}
\section{A blank pseudospace}\label{S:PS}

Hrushovski and Srour produced the first example \cite{HS89} of a
non-equational stable theory by adding two colours to an underlying
($2$-dimensional) free pseudospace, a structure later studied by
Baudisch and Pillay \cite{BP00}. Subsequently, the free
($n$-dimensional) pseudospace has been considered from different
perspectives, either as a lattice \cite{kT14, TZ17} or as a
right-angled building \cite{BMPZ14, BMPZ17}, in order to show that the
ample hierarchy is strict. In this section, we will recall the basic
properties of the free $2$-dimensional pseudospace.

A \emph{geometry} is a graph whose vertices have levels $0$, $1$ and
$2$. Vertices of level $0$ are called \emph{points} (usually denoted
by the letter $c$), whereas vertices of level $1$ are \emph{lines}
(denoted by $b$) and vertices of level $2$ are \emph{planes} (denoted
by $a$). In an abuse of notation, we say that \emph{the point} $c$
\emph{lies in the plane} $a$ if there is a line $b$ contained in $a$
passing through $c$, though there are no edges between points and
planes. We refer to a subgraph of the form $a-b-c$ as a \emph{flag}.

A \emph{letter} $s$ is a non-empty subinterval of $[0,2]$. Given a
flag $F$ in a geometry $A$, a new geometry $B$ is obtained from $A, F$
via the \emph{operation} $s$ by freely adding a new flag $G$ which
agrees with $F$ on the levels in $[0,2]\setminus s$:

\begin{figure}[!htbp]
\begin{subfigure}[h]{0.20\textwidth}

  \resizebox{\linewidth}{!}{
    \begin{tikzpicture}

      \fill  (0,2)  node [left]  {$a$}  circle (2pt);
      \fill   (0,1)  node [left]  {$b$} circle (2pt);
      \fill   (0,0)  node [left] {$c$} circle (2pt);
      \fill (-1/3, -1/3)  node [left]  {$F$} ;
      \fill   (1,0)  node [right] {$c'$} circle (2pt);
      \fill (4/3, -1/3)  node [right]  {$G$} ;

      \draw  (0,2) --  (0,1) --  (0,0) ;
      \draw (0,1) --  (1,0) ;
      \draw[->]  (0, -1/3) to node [pos=.5, below=1mm] {$[0]$} (1, -1/3) ;

		\end{tikzpicture}
	}\vskip-3mm
	\caption*{Operation $[0]$}
\end{subfigure}	\hspace{1cm}
\begin{subfigure}[h]{0.20\textwidth}

  \resizebox{\linewidth}{!}{
    \begin{tikzpicture}

      \fill  (0,2)  node [left]  {$a$}  circle (2pt);
      \fill   (0,1)  node [left]  {$b$} circle (2pt);
      \fill   (0,0)  node [left] {$c$} circle (2pt);
      \fill (-1/3, -1/3)  node [left]  {$F$} ;
      \fill   (1,1)  node [right] {$b'$} circle (2pt);
      \fill (4/3, -1/3)  node [right]  {$G$} ;

      \draw  (0,2) --  (0,1) --  (0,0) ;
      \draw (0,2) --  (1,1) -- (0,0) ;
      \draw[->]  (0, -1/3) to node [pos=.5, below=1mm] {$[1]$} (1, -1/3) ;

    \end{tikzpicture}
  }
  \vskip-3mm
  \caption*{Operation $[1]$}
\end{subfigure}	\hspace{1cm}
\begin{subfigure}[h]{0.20\textwidth}

  \resizebox{\linewidth}{!}{
    \begin{tikzpicture}

      \fill  (0,2)  node [left]  {$a$}  circle (2pt);
      \fill   (0,1)  node [left]  {$b$} circle (2pt);
      \fill   (0,0)  node [left] {$c$} circle (2pt);
      \fill (-1/3, -1/3)  node [left]  {$F$} ;
      \fill   (1,2)  node [right] {$a'$} circle (2pt);
      \fill (4/3, -1/3)  node [right]  {$G$} ;

      \draw  (0,2) --  (0,1) --  (0,0) ;
      \draw (0,0) --  (0,1) --  (1,2) ;
      \draw[->]  (0, -1/3) to node [pos=.5, below=1mm] {$[2]$} (1, -1/3) ;
    \end{tikzpicture}
}
  \vskip-3mm
  \caption*{Operation $[2]$}
\end{subfigure}
\vskip8mm
\begin{subfigure}[h]{0.25\textwidth}
  \resizebox{\linewidth}{!}{
    \begin{tikzpicture}[->=latex,text height=1ex,text depth=1ex]

      \fill  (0,0)  node [below left]  {$c$}  circle (2pt);
      \fill   (-1/2,1)  node [below left]  {$b$} circle (2pt);
      \fill   (-1,2)  node [below left] {$a$} circle (2pt);
      \fill (-4/3, 7/3)  node [left]  {$F$} ;
      \fill   (1/2,1)  node [below right] {$b'$} circle (2pt);
      \fill   (1,2)  node [below right] {$a'$} circle (2pt);
      \fill (4/3, 7/3)  node [right]  {$G$} ;

      \draw  (0,0) --  (-1/2,1) --  (-1,2) ;
      \draw (0,0) --  (1/2,1) --  (1,2) ;
      \draw[bend left=30,->]  (-4/3, 7/3) to node [pos=.5, below=2mm] {$[1,2]$} (4/3,
      7/3);
	\end{tikzpicture}
  }\vskip-3mm
  \caption*{Operation $[1,2]$}
\end{subfigure}\hspace{1cm}
\begin{subfigure}[h]{0.25\textwidth}

	\resizebox{\linewidth}{!}{
	  \begin{tikzpicture}[->=latex,text height=1ex,text depth=1ex]

	    \fill  (0,2)  node [left]  {$a$}  circle (2pt);
	    \fill   (-1/2,1)  node [left]  {$b$} circle (2pt);
	    \fill   (-1,0)  node [left] {$c$} circle (2pt);
	    \fill (-4/3, -1/3)  node [left]  {$F$} ;
	    \fill   (1/2,1)  node [right] {$b'$} circle (2pt);
	    \fill   (1,0)  node [right] {$c'$} circle (2pt);
	    \fill (4/3, -1/3)  node [right]  {$G$} ;

	    \draw  (0,2) --  (-1/2,1) --  (-1,0) ;
	    \draw (0,2) --  (1/2,1) --  (1,0) ;
	\draw[bend right=30,->]  (-4/3, -1/3) to node [pos=.5, below right=2mm]
             {$[0,1]$} (4/3, -1/3);
	  \end{tikzpicture}
        }\vskip-3mm
        \caption*{Operation $[0,1]$}
\end{subfigure}
\begin{subfigure}[h]{0.22\textwidth}

  \resizebox{\linewidth}{!}{
    \begin{tikzpicture}[->=latex,text height=1ex,text depth=1ex]

      \fill  (0,2)  node [left]  {$a$}  circle (2pt);
      \fill   (0,1)  node [left]  {$b$} circle (2pt);
      \fill   (0,0)  node [left] {$c$} circle (2pt);
      \fill (-1/3, -1/3)  node [left]  {$F$} ;
      \fill  (1,2)  node [right]  {$a'$}  circle (2pt);
      \fill   (1,1)  node [right] {$b'$} circle (2pt);
      \fill   (1,0)  node [right] {$c'$} circle (2pt);
      \fill (4/3, -1/3)  node [right]  {$G$} ;

      \draw  (0,2) --  (0,1) --  (0,0) ;
	\draw (1,2) --  (1,1) --  (1,0) ;
	\draw[->]  (0, -1/3) to node [pos=.5, below=1mm] {$[0,2]$} (1, -1/3);

	\end{tikzpicture}
  }\vskip-3mm
  \caption*{Operation $[0,2]$}
\end{subfigure}
\end{figure}

The \emph{free pseudospace} $\mi$ is obtained by successively applying
countably many times all of the above operations starting from a flag.
The geometry $\mi$ is independent, up to isomorphism, of the order in
which the operations are applied. It is denoted by $\mi^2$ in
\cite[Definition 4.6]{BMPZ14}. Observe that the geometry obtained
by
only considering the operations $0$, $1$ and $2$ is an elementary
substructure (\cf \cite[Corollary 4.31]{BMPZ14}) of $\mi$
(namely, the prime model).

\bigskip

We will now exhibit the axioms for the theory $\ps$ of $\mi$. Let us
first fix some notation. We will write the symbol $i$ for the letter
$[i]$, that is, from now on, we will write $0$ instead of $[0]$,
etc.  A \emph{word} is a sequence of letters. A
\emph{permutation} of the word $u$ is obtained by successively
replacing an occurrence of the subword $0\cdot 2$ by the subword
$2\cdot 0$; similarly the subword $2\cdot 0$ is permuted to $0\cdot
2$ (Note that $0\cdot 2$ denotes the word $[0]\cdot [2]$, but
we do not want to render the notation cumbersome with additional
brackets). The word $u$ is \emph{reduced} if it does not contain,
up to
permutation, a subword of the form $s\cdot t$, where $s\subset t$ or
$t\subset s$ (please note that our notation $s\subset t$ does not
imply $s\subsetneq t$).

A \emph{flag path} \[F_0\lmto{s_1}F_1\dotsb F_{n-1}\lmto{s_n}F_n\]
with word $u=s_1\dotsb s_n$ is a sequence of flags such that, for each
$1\leq i\leq n$, the flag $F_i$ agrees with $F_{i-1}$ exactly in the
levels in $[0,2]\setminus s_i$. The above flag path is \emph{reduced}
if its word is reduced and for each $i$, the flags $F_{i-1}$ and $F_i$
cannot be connected by a \emph{splitting}, that is, a flag subpath
whose word consists of proper subletters of $s_i$. It is not hard to
show that every two flags are connected by a reduced path
\cite[Corollary 3.13]{BMPZ17}.

\begin{fact}\label{F:PS}\cite[Theorem 4.12]{BMPZ17}
  The theory $\ps$ is axiomatised by the following properties:
  \begin{enumerate}
  \item The universe is a geometry such that every vertex lies in a flag.
  \item For every level $i$ in $[0,2]$ and every flag $F$, there are
    infinitely many flags $G$ with $F\lmto{i}G$.
  \item\label{F:PS:simple} Every closed reduced flag path
    $F_0\lmto{s_1}F_1\dotsb F_{n-1}\lmto{s_n}F_0$ has length $n=0$.
  \end{enumerate}
\end{fact}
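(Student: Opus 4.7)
The plan is to establish Fact \ref{F:PS} in two stages: first verify that $\mi$ itself satisfies the three axioms, and then prove that any two models of the axioms are elementarily equivalent (indeed, that the axioms admit a natural form of quantifier elimination).

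For the first stage, axioms $(1)$ and $(2)$ are immediate from the inductive construction of $\mi$: every vertex is introduced as part of a flag during some operation, and for each letter $i\in\{0,1,2\}$ and each flag $F$ existing at some stage, the operation $[i]$ is applied countably many times to $F$, producing infinitely many distinct flags $G$ with $F\lmto{i}G$. Verifying axiom $(\ref{F:PS:simple})$ for $\mi$ is the genuinely combinatorial step and the main obstacle: one must show that the free construction introduces no accidental coincidences, so that every reduced flag path going back to its starting flag must be empty. My approach would be to fix a normal form for flag paths based on the commutation rule $0\cdot 2 \sim 2\cdot 0$ — essentially the right-angled Coxeter relations on $\{0,1,2\}$ with $0$ and $2$ commuting — and to argue by induction on the construction of $\mi$ that each operation only adds flags connected to the previous ones by a unique reduced path, so no nontrivial reduced loop can ever arise.

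For the second stage, I would run a back-and-forth between two $\omega$-saturated models $\mathbb{M}$ and $\mathbb{N}$ satisfying $(1)$--$(\ref{F:PS:simple})$. Given a finite partial isomorphism $\sigma$ between subgeometries containing at least one flag, and a new vertex $v\in\mathbb{M}$, pick a reduced flag path from a flag in the domain of $\sigma$ to a flag through $v$. Using axiom $(2)$ on the $\mathbb{N}$-side, realise the same sequence of operations one step at a time, producing a corresponding vertex $v'\in\mathbb{N}$. The content of axiom $(\ref{F:PS:simple})$ is precisely that this extension is consistent: no relation is forced between $v'$ and previously chosen images beyond what is forced between $v$ and the domain of $\sigma$, because any such unexpected incidence would manifest as a closed reduced flag path of positive length in either structure. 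This yields a back-and-forth system, hence $\mathbb{M}\equiv\mathbb{N}$, and in particular $\ps$ is complete.

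The hard part, as indicated, is the normal form and the invariance statement underlying axiom $(\ref{F:PS:simple})$: one must show that between any two flags there is a reduced flag path unique up to the commutation of $0$ and $2$, and that the free construction of $\mi$ respects this uniqueness. Once this combinatorial backbone is in place — essentially the result of \cite[Theorem 4.12]{BMPZ17} — both the verification in $\mi$ and the back-and-forth in arbitrary models proceed uniformly by induction on the length of a reduced word.
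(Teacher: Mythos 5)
This statement is quoted in the paper as a \emph{Fact}, with no proof given: it is imported wholesale from \cite[Theorem 4.12]{BMPZ17} (together with \cite[Theorem 3.26]{BMPZ17} for the first-order expressibility of property (\ref{F:PS:simple})). So there is no internal argument to compare yours with; what can be assessed is whether your outline would stand on its own, and as it is written it does not. You yourself defer the ``combinatorial backbone'' to ``essentially the result of \cite[Theorem 4.12]{BMPZ17}'', which makes the proposal circular as a proof of that very theorem: the normal-form statement (uniqueness of the reduced word $\dd(F,G)$ up to permutation) and the verification of (\ref{F:PS:simple}) in $\mi$ are the content of the cited work, not a routine induction. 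Note also that the combinatorics is richer than ``right-angled Coxeter relations on $\{0,1,2\}$ with $0,2$ commuting'': the letters are all six non-empty subintervals of $[0,2]$, reduction involves cancellation of $s\cdot t$ with $s\subset t$ as well as splittings, and the two-element operations $[0,1]$, $[1,2]$, $[0,2]$ are not compositions of the singleton ones (this is exactly why the substructure built from $[0]$, $[1]$, $[2]$ alone is a \emph{proper} elementary substructure). Your back-and-forth step ``realise the same sequence of operations using axiom (2)'' therefore does not literally apply to two-element letters; one needs the incidence consequences of (\ref{F:PS:simple}) (e.g.\ the pseudoplane properties) to produce such flags in an abstract model.

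The more serious gap is in the back-and-forth itself. Extending a finite partial isomorphism by following a single reduced flag path from \emph{one} flag of the domain to a flag through the new vertex does not control the new vertex's relations to the \emph{rest} of the domain, and quantifier-free types of arbitrary subgeometries do not determine types (cf.\ Fact \ref{F:qf_alg}, which requires algebraically closed sets). The known arguments run the back-and-forth over finite nice (equivalently, algebraically closed) sets and use the existence of a base-point $G$ of $F$ over such a set $A$, with $\dd(F,G')$ the non-splitting reduction of $\dd(F,G)\cdot\dd(G,G')$ for every flag $G'$ of $A$; it is this base-point/amalgamation machinery, not merely the absence of closed reduced paths, that guarantees the extension is consistent with the whole domain. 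Finally, ``axiomatised by'' presupposes that (\ref{F:PS:simple}) can be written as a set of first-order sentences, which is itself a theorem (\cite[Theorem 3.26]{BMPZ17}) and is not addressed in your plan. In short, the strategy you describe is the right one in outline, but the three points above are precisely where the work lies, and they are missing.
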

\noindent It was proven in \cite[Theorem 3.26]{BMPZ17} that property
(\ref{F:PS:simple}) can be expressed by a set of elementary sentences.

We will now describe types and the geometry of forking in the
pseudospace. We refer the reader to \cite[Sections 3--7]{BMPZ17} for
the corresponding proofs. Since there are no non-trivial reduced
closed paths of flags, the word $u$ connecting two flags $F$ and $G$
by a reduced path $F\lmto u G$ is unique, up to permutation, and will
be denoted by $\dd(F,G)$. The flags $F$ and $G$ agree modulo a subset
$S$ of $[0,2]$, that is, they have the same vertices in all levels off
$S$, if and only if the letters in $\dd(F,G)$ are all contained in
$S$. In particular, the collection of points and lines, resp. lines
and planes, form a pseudoplane, so every two lines intersect in at
most one point, resp. lie in at most one plane. Furthermore, the
intersection of two distinct planes is either empty, a unique point or
a unique line \cite[Axiom $\Sigma3$]{BP00}. Actually, the
geometry forms a lattice,
once a smallest element $\mathbf 0$ and a largest element $\mathbf 1$
are added \cite{kT14, TZ17}.

If $u=\dd(F,G)=u_1\cdot u_2$, given two reduced flag paths
\begin{figure}[h]
  \centering

  \begin{tikzpicture}[>=latex,->]

    \matrix (A) [matrix of math nodes,column sep=1cm]
	    {  & H &  \\
	      F &   &   G, \\
	      &  H_1 &    \\};

	    \draw (A-2-1)  edge node[pos=.7, below] {$u_1$} 	(A-1-2) ;
	    \draw (A-2-1)  edge node[pos=.7, below left] {$u_1$} 	(A-3-2) ;

	    \draw (A-1-2)  edge node[pos=.3, below] {$u_2$} 	(A-2-3) ;
	    \draw (A-3-2)  edge node[pos=.5, below] {$u_2$} 	(A-2-3) ;

  \end{tikzpicture}

\end{figure}

\noindent and a vertex $p$ in $H$ of level $i$ which does not
\emph{wobble}, that is, such that the word $u_1\cdot i$ or the word $ i\cdot
u_2$ is
reduced, then $p$ is also a vertex of $H_1$. In particular, the vertex
$p$ is definable over $F, G$.

A non-empty subset $A$ of $\mi$ is \emph{nice} if:
\begin{itemize}
\item every vertex in $A$ lies in a flag fully contained in $A$; and
\item every two flags in $A$ are connected by a reduced path of flags
  in $A$.
\end{itemize}
\noindent
Algebraic closure and the definable closure of a set $X$ agree
\cite[Proposition 2.29]{kT14} and coincide with the intersection of
all nice sets $A\supset X$ \cite[Corollary 2.27]{kT14} (see also
\cite[Proposition 5.1\ \&  Corollary 5.4]{TZ17}). If $X$ is finite,
then so is the algebraic closure. The quantifier-free type of a nice
subset determines its type. More generally:

\begin{fact}\label{F:qf_alg}\textup{(}\cf\
	\cite[Remark 2.26]{kT14} \ \& \cite[Corollary
	3.12]{TZ17}\textup{)}
  The quantifier-free type of an algebraically closed subset
  determines its type in $\ps$.
\end{fact}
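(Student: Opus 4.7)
The plan is to reduce the statement to the already cited fact that the quantifier-free type of a \emph{nice} subset determines its type in $\ps$. Let $A$ and $B$ be two algebraically closed subsets of a sufficiently saturated model of $\ps$ with the same quantifier-free type, identified by a quantifier-free isomorphism $f\colon A\to B$. The strategy is to enlarge $A$ and $B$ simultaneously to nice subsets $A^*\supseteq A$ and $B^*\supseteq B$ so that $f$ extends to a quantifier-free isomorphism $A^*\to B^*$; the conclusion then follows by applying the nice-subset case to $A^*$ and $B^*$.

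The obstruction to $A$ being nice consists of two kinds of missing vertices: flag-completions of vertices of $A$ that do not already lie in a flag contained in $A$, and wobbling vertices along reduced flag paths between flags of $A$. The non-wobbling vertices of such paths are automatically in $A$, since they are definable over the endpoint flags and $\acl=\dcl$ in $\ps$. Both kinds of missing vertices are ``free'' over $A$ in the sense that each admits infinitely many realizations in the ambient model, by axiom $(2)$ of Fact~\ref{F:PS}; moreover, by axiom $(3)$ the combinatorial pattern of reduced paths is uniquely determined up to permutation by the endpoint flags, so inserting these vertices does not force any new quantifier-free relations with the existing elements of $A$.

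The back-and-forth is then carried out by enumerating the missing vertices on either side and, at each stage, adding one vertex to the appropriate set while producing a matching vertex on the other side via axiom $(2)$. The main technical obstacle is the bookkeeping: whenever a new wobbling vertex is inserted, one must verify that it does not create an unintended closed reduced flag path with existing vertices, since such a path would violate axiom $(3)$ and could alter the quantifier-free type. This is the point where axiom $(3)$ has to be invoked systematically, using the uniqueness of reduced paths between flags and the observation that the new vertex is chosen outside the algebraic closure of the previous stage. Once $A^*$ and $B^*$ are nice and quantifier-free isomorphic via the extended $f$, the nice-subset case yields $\tp(A^*)=\tp(B^*)$ and consequently $\tp(A)=\tp(B)$, as desired.
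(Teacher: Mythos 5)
There is a genuine gap, and it sits at the very first step of your reduction. (Note also that the paper itself does not prove this Fact: it is quoted from \cite{kT14} and \cite{TZ17}, so the only meaningful comparison is with the arguments in those references, which proceed through a detailed analysis of the words $\dd(F,G)$, base-points and non-splitting reductions rather than through the quick reduction you sketch.) Your plan requires that, from the quantifier-free isomorphism $f\colon A\to B$ alone, one can build nice extensions $A^*\supseteq A$ and $B^*\supseteq B$ together with a quantifier-free isomorphism extending $f$. But the data you use to decide \emph{what} to add on the $A$-side --- which flag-completions, and reduced paths with which words between which flags --- is the relative position of the vertices of $A$ in the ambient model, and this is not determined by the quantifier-free type in the incidence language you are working in. Concretely: a pair $\{a,c\}$ consisting of a plane and a point is algebraically closed both when $c$ lies in $a$ (this is asserted in the proof of Theorem~\ref{T:path_k}: $\{a,c\}$ is the intersection of the infinitely many flags $a-b-c$) and when $a$ and $c$ are independent (by Remark~\ref{R:ver_acl}(\ref{R:ver_acl:acl})), and since there are no edges between points and planes the two configurations have the same quantifier-free diagram, while their types differ by the formula $\exists b\,(b\subset a \wedge c\in b)$. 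So a quantifier-free isomorphism between algebraically closed sets simply does not carry the information needed to match nice extensions; your appeal to ``the combinatorial pattern of reduced paths is uniquely determined by the endpoint flags'' is true inside a fixed model, but that pattern is exactly what is not visible to $f$. Any correct proof must either work in the richer language of the cited papers (where such incidences, or the distance words, are part of the quantifier-free data) or first prove that for algebraically closed sets this extra positional data is recoverable --- and that recovery is the real content of the Fact, not a formality.

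A secondary, more repairable issue: the step ``produce a matching vertex on the other side via axiom (2) of Fact~\ref{F:PS}'' needs more than the existence of infinitely many flags through a given flag. You must ensure the new vertex has no unintended incidences or equalities with the \emph{entire} set built so far, which may be infinite; ``infinitely many candidates, choose one outside the algebraic closure of the previous stage'' only handles finite stages, and even then one needs the uniqueness facts (two points lie on at most one common line, two planes meet in at most one line, etc., consequences of axiom (3)) to convert ``outside the algebraic closure'' into ``no extra edges''. With saturation and a reduction to finite algebraically closed subsets this can be fixed, but as written the bookkeeping you acknowledge is precisely where the argument is missing, and the primary gap above would remain even after fixing it.
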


Observe that if we apply one of the operations $[0]$, $[1]$ or $[2]$
to a flag in a nice set $A$, the resulting geometry is again nice.

Given a flag $F$ and a nice subset $A$, there is a flag $G$ in $A$
(called a \emph{base-point} of $F$ over $A$) such that, for any flag
$G'$ in $A$, the word $\dd(F,G')$ is the \emph{non-splitting
  reduction} of $\dd(F,G)\cdot \dd(G, G')$, that is, whenever a subword
$s\cdot t$ or $t\cdot s$ occurs in a permutation of the product
$\dd(F,G)\cdot \dd(G, G')$, with $s\subset t$, we cancel $s$. If we
consider a reduced flag path $P$ connecting $F$ to some base-point $G$
over $A$ with word $\dd(F,G)$, the set $A\cup P$ is again nice. Any
flag occurring in the nice set $P$ appears in a permutation of the
path $P$.

The theory $\ps$ of $\mi$ is $\omega$-stable of rank $\omega^2$,
equational with perfectly trivial forking and has weak elimination of
imaginaries. Forking can be easily described: Given nice sets $A$ and
$B$ containing a common algebraically closed subset $C$, we have that
$A\ind_C B$ if and only if for every nice set $D\supset C$ and flags
$F$ in $A$ and $H$ in $B$ we have that $\dd(F, H)$ is the
non-splitting reduction of $\dd(F,G)\cdot \dd(G, H)$, where $G$ is a
base-point of $F$ over $D$. In particular,

\[F\ind_G D.\]

\begin{remark}\label{R:ver_acl}\textup{(}\cite[Theorem
2.35]{kT14} \ \& \cite[Proposition 4.3 \& Theorem
4.13]{TZ17}\textup{)}
  Assume that $A$, $B$ and $C=A\cap B$ are algebraically closed and
  $A\ind_C B$. Then
  \begin{enumerate}
  \item\label{R:ver_acl:acl}$A\cup B$ is algebraically closed,
  \item\label{R:ver_acl:direct}if a vertex $x$ in $A$ is directly
    connected to a vertex $y$ in $B$, then $x$ or $y$ must lie in $C$,
  \item\label{R:ver_acl:pp}if a point in $A$ lies in a plane of $B$,
    then there is a line in $C$ connecting them,
  \item\label{R:ver_acl:inf} a point $c$, which belongs to both a line in
    $A\setminus C$ and to a line in $B\setminus C$, lies in $C$.
  \end{enumerate}
\end{remark}

Before introducing the $k$-colored pseudospace in section
\ref{S:ColPS}, we will prove several auxiliary results about the free
pseudospace. We hope that this will allow the reader to become more
familiar with the theory $\ps$.

\begin{lemma}\label{L:AP_unabh}
  Let $X$ and $Y$ be algebraically closed sets independent over their
  common intersection $Z$. Given a point $c$ not contained in
  $Y\setminus Z$ lying in the line $b$ of $X$, then \[
  X\cup\{c\}\ind_Z Y.\]
\end{lemma}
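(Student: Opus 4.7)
My plan is to reduce, via transitivity of nonforking, to showing $c\ind_X Y$, and then verify it by matching quantifier-free types. If $c\in X$ the conclusion is immediate, so assume $c\notin X$; since $Z\subseteq X$ this forces $c\notin Z$, and the hypothesis $c\notin Y\setminus Z$ then gives $c\notin Y$.

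First I would verify that $X\cup\{c\}$ is nice: by niceness of $X$, pick a plane $a\in X$ through $b$ and a point $c_0\in X$ on $b$; the flag $(a,b,c)$ is obtained from the flag $(a,b,c_0)$ by the operation $[0]$. Hence $X\cup\{c\}$ coincides with its algebraic closure. The same argument, applied above the algebraically closed set $X\cup Y$ (algebraic closure provided by Remark \ref{R:ver_acl}(\ref{R:ver_acl:acl})), shows that $(X\cup Y)\cup\{c\}$ is algebraically closed. The key geometric observation is that $b$ is the unique line of $X\cup Y$ containing $c$: another line $b^*\in X$ through $c$ would yield $c\in\acl(X)=X$, while a line $b^*\in Y\setminus Z$ through $c$ would force either $b\in Z$ (so both $b, b^*\in Y$ and $c\in\acl(Y)=Y$) or $b\in X\setminus Z$ (whence Remark \ref{R:ver_acl}(\ref{R:ver_acl:inf}) gives $c\in Z$); both alternatives contradict $c\notin X\cup Y$.

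By extension I can choose $c'\equiv_X c$ with $c'\ind_X Y$. Since $c\notin\acl(X)$, also $c'\notin\acl(X)$, and the nonforking $c'\ind_X Y$ prevents $c'$ from being algebraic over $Y$, so $c'\notin Y$. Transitivity with $X\ind_Z Y$ then yields $X\cup\{c'\}\ind_Z Y$, and the same case analysis applied to $A=X\cup\{c'\}$, $B=Y$, $C=Z$ shows that $b$ is also the unique line of $X\cup Y$ through $c'$. Thus $(X\cup Y)\cup\{c\}$ and $(X\cup Y)\cup\{c'\}$ are algebraically closed and share the same quantifier-free type over $X\cup Y$, so Fact \ref{F:qf_alg} gives $c\equiv_{XY}c'$, and hence $c\ind_X Y$. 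A final use of transitivity with $X\ind_Z Y$ produces $X\cup\{c\}\ind_Z Y$. The delicate step is controlling the lines through $c$ and $c'$ in $X\cup Y$; this is exactly what Remark \ref{R:ver_acl}(\ref{R:ver_acl:inf}) provides.
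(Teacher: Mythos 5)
Your proof is correct, but it takes a genuinely different route from the paper's. Both arguments begin with the same transitivity reduction to showing $c\ind_X Y$; at that point the paper finishes in two lines by quoting that the type of $c$ over the algebraically closed set $X$ has Morley rank $1$ (it is strongly minimal, by \cite[Remark 6.2]{BP00}, cf.\ \cite[Corollary 7.13]{BMPZ14}), so its extension to $X\cup Y$ could only fork by becoming algebraic, which is excluded because $c\notin X\cup Y=\acl(XY)$. You avoid the rank computation entirely and instead identify $\tp(c/XY)$ with the nonforking extension realised by $c'$: using Remark \ref{R:ver_acl} you show that $b$ is the only line of $X\cup Y$ through $c$ (and through $c'$), match quantifier-free types of the algebraically closed sets $(X\cup Y)\cup\{c\}$ and $(X\cup Y)\cup\{c'\}$ over $X\cup Y$, and conclude by Fact \ref{F:qf_alg}. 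This is longer but more self-contained, relying only on the structural facts recalled in Section \ref{S:PS} rather than on the cited strong minimality. One small repair is needed: $X$ and $X\cup Y$ are assumed algebraically closed, not nice, so you cannot literally ``pick a plane $a\in X$ through $b$ and a point $c_0\in X$ on $b$'' to run the operation-$[0]$ argument (think of $X$ containing the line $b$ but no flag through it). The algebraic closedness of $X\cup\{c\}$ and of $(X\cup Y)\cup\{c\}$ is nevertheless exactly the content of Remark \ref{R:acl_color} applied in the reduct $\ps$ (equivalently, intersect over nice supersets $N$, observing that $N\cup\{c\}$ is again nice), so the claim stands and the rest of your argument goes through unchanged.
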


\begin{proof}
  By the transitivity of non-forking, we may assume that $Z=X$. If $c$
  belongs to $X$, then there is nothing to prove. Otherwise, the type
  of $c$ over $X$ has Morley rank $1$ (it is actually strongly
  minimal), by \cite[Remark 6.2]{BP00} (cf. \cite[Corollary
    7.13]{BMPZ14}). Since the extension $\tp(c/Y)$ is not algebraic,
  it does not fork over $X$.
\end{proof}

\begin{lemma}\label{L:pairwise}
  The type of a set $X$ is determined by the collection of types
  $\tp(x,x')$, with $x$ and $x'$ in $X$.

  In particular, if $X\equiv_Z X'$ and $X\equiv_Y X'$, then
  $X\equiv_{YZ} X'$.
\end{lemma}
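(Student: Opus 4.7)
The plan is to reduce to the quantifier-free type of the algebraic closure and exploit the purely binary language of $\ps$. By Fact \ref{F:qf_alg}, the type of $X$ is determined by the quantifier-free type of $\acl(X)$. Since the only relation symbols are the unary level predicates and the binary incidence relation, every quantifier-free atomic formula mentions at most two variables, so the quantifier-free type of an enumerated set is determined by its pairwise quantifier-free types. It therefore suffices to show that pairwise complete types on $X$ determine the pairwise quantifier-free types on the whole of $\acl(X)$.

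To this end, I would build $\acl(X)$ iteratively from pairs: set $A_0 = X$ and $A_{k+1} = A_k \cup \bigcup_{v,w\in A_k} \acl(v,w)$. Using the perfectly trivial forking of $\ps$, together with the description of $\acl$ as a smallest nice subset via base-points and reduced flag paths recalled before the lemma, one verifies that $\acl(X) = \bigcup_k A_k$: every algebraic element enters for a pairwise reason (e.g., the unique intersection line of two planes, or the unique intersection point of two lines). Given a bijection $f : X \to X'$ realising the hypothesis, Fact \ref{F:qf_alg} yields canonical isomorphisms $\acl(v,w) \simeq \acl(f(v),f(w))$ for each pair; by induction on $k$ these glue into a bijection $A_k \to A_k'$ preserving pairwise complete types, hence in the limit a bijection $\acl(X) \to \acl(X')$ preserving pairwise quantifier-free types. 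Applying Fact \ref{F:qf_alg} once more concludes $\tp(X) = \tp(X')$.

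The main obstacles are the two verifications underlying the iterative construction: first, that $\acl(X) = \bigcup_k A_k$, which is the geometric manifestation of perfectly trivial forking in $\ps$ and is extracted from the explicit base-point description; second, that the pairwise isomorphisms produced at stage $k$ are compatible on overlaps, which relies on the uniqueness up to permutation of reduced flag paths between two given flags. Everything else is bookkeeping.

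For the ``in particular'' clause, I would apply the first assertion to the sets $X \cup Y \cup Z$ and $X' \cup Y \cup Z$ via the map $f \cup \mathrm{id}_{Y \cup Z}$. Every pairwise type matches: pairs inside $X$ are handled by $X \equiv X'$, which follows from either hypothesis; mixed pairs $(x,y)$ with $y\in Y$ by $X \equiv_Y X'$; mixed pairs $(x,z)$ with $z\in Z$ by $X \equiv_Z X'$; pairs inside $Y \cup Z$ trivially by the identity. The first part then yields $X \cup YZ \equiv X' \cup YZ$, i.e.\ $X \equiv_{YZ} X'$.
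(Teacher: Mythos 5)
Your opening reduction is sound as far as it goes: by Fact \ref{F:qf_alg} it suffices to extend $f$ to a bijection $\acl(X)\to\acl(X')$ preserving quantifier-free types, and since the language of $\ps$ is binary, quantifier-free types of tuples are indeed determined by their restrictions to pairs. The gap is in the inductive gluing step, which is circular. From $\tp(v,w)=\tp(f(v),f(w))$ you do get, for each \emph{single} pair, a canonical elementary map $\acl(v,w)\to\acl(f(v),f(w))$ (algebraic and definable closure coincide in $\ps$). But to know (a) that two such maps agree on an element of $\acl(v,w)\cap\acl(v'',w'')$, and (b) that their union preserves the pairwise (even quantifier-free) type of a cross pair $\xi\in\acl(v,w)$, $\eta\in\acl(v'',w'')$, you must compare the types of the quadruples $(v,w,v'',w'')$ and $(f(v),f(w),f(v''),f(w''))$ --- precisely the kind of ``pairwise determines global'' statement the lemma is meant to establish, and not part of the hypothesis. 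The appeal to uniqueness of reduced flag paths between two flags does not address this cross-pair coherence. In addition, the identity $\acl(X)=\bigcup_k A_k$ (algebraic closure generated by iterated pairwise closures) is asserted rather than proved: ``perfectly trivial forking'' is a statement about independence, not about algebraic closure, and no such binarity of $\acl$ appears among the facts recalled in Section \ref{S:PS}.

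The paper's proof avoids both problems by a different device: to each $x_\alpha\in X$ it attaches a flag $F_\alpha$ through $x_\alpha$ chosen so that $F_\alpha\ind_{x_\alpha} X\cup\{F_\beta\}_{\beta<\alpha}$; stationarity of $\tp(F_\alpha/x_\alpha)$ then lets the pairwise types $\tp(x_\alpha,x_\beta)$ determine the pairwise types of the flags $(F_\alpha,F_\beta)$, and Theorem 7.24 of \cite{BMPZ17} --- the type of a family of flags is determined by its pairwise types --- concludes. That theorem is exactly the non-trivial binary-determination input your induction would have to re-derive, so without it (or an equivalent argument) the proposal does not close. Your derivation of the ``in particular'' clause from the main statement is correct.
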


\begin{proof}
  Choose an enumeration of $X=\{x_\alpha\}_{\alpha <\kappa}$ and flags
  $F_\alpha$ containing $x_\alpha$, for $\alpha<\kappa$, such that
  $F_\alpha\ind_{x_\alpha} X \cup\{F_\beta\}_{\beta<\alpha}$. In
  particular, for $\alpha\neq \beta$, we have that \[
  F_\alpha\ind_{x_\alpha} x_\beta \text{ \quad and \quad }
  F_\alpha\ind_{x_\alpha, x_\beta} F_\beta. \]

  \noindent Since the type of $F_\alpha$ over $x_\alpha$ is
  stationary, the type of the pair $(x_\alpha, x_\beta)$ determines
  the type of $F_\alpha, F_\beta$. By \cite[Theorem 7.24]{BMPZ17}, the
  type of $(F_\alpha)_{\alpha<\kappa}$, hence the type of $X$, is
  uniquely determined by the collection of types
  $\tp(x_\alpha,x_\beta)$, for $\alpha, \beta<\kappa$.
\end{proof}
\section{A colored pseudospace}\label{S:ColPS}

Work inside a sufficiently saturated model $\mathbb U$ of the theory
$\ps$ of the free pseudospace and consider a natural number $k\geq 2$.
For $0\leq i<k$, we use the notation $i+1$ instead of $i+1 \textrm{
  mod } k$, and likewise $i-1$ for $i-1 \textrm{ mod } k$.

We colour the lines in $\mathbb U$, as well as the pairs $(a,c)$,
where the point $c$ lies in the plane $a$, with $k$ many colours.
Formally, we partition the set of lines into subsets $C_0, \ldots,
C_{k-1}$, and the set of pairs $(a,c)$, where $c$ lies in the plane
$a$, into $I_0,\ldots, I_k$. Given a plane $a$ and an index $0\leq
i<k$, we denote by the \emph{section} $I_i(a)$ the collection of
points $c$ with $I_i(a,c)$.

\bigskip
Consider the theory $\cps_k$ of $k$-colored pseudospaces with
following axioms:

\begin{itemize}
\item The axioms of $\ps$.
\end{itemize}
\noindent {\sc Universal Axioms}
\begin{itemize}
\item For each $0\leq i <k$, given a line $b$ with colour $i$ in a
  plane $a$, all the points $c$ in $b$ lie in the section $I_i(a)$
  except at most one point, which lies in $I_{i+1}(a)$ (if such a point exists,
  we call it the \emph{exceptional} point of $b$ in $a$).
\end{itemize}
\noindent {\sc Inductive Axioms}
\begin{itemize}
\item Every line $b$ in a plane $a$ contains an \emph{exceptional}
  point, denoted by $\expt(a,b)$.
\item For each $0\leq i <k$, given a point $c$ and a plane $a$ with
  $I_i(a,c)$, there are infinitely many lines in $a$ passing through
  $c$ with colour $i$.
\item For each $0\leq i <k$, given a point $c$ and a plane $a$ with
  $I_i(a,c)$, there are infinitely many lines in $a$ passing through
  $c$ with colour $i-1$.
\item For every point $c$ in a line $b$, there are infinitely many
  planes $a$ containing $b$ such that $c$ is exceptional for $b$ in
  $a$.
\end{itemize}

\bigskip

We can construct a model of $\cps_k$ as follows: We start with a flag
$A_0=\{a-b-c\}$ with any colouring, eg.\ $b\in C_0$ and $I_0(a,c)$ and
construct an ascending sequence $A_0\subset A_1\subset\dotsb$ of
coloured geometries by applying one the operations $[0]$, $[1]$ and
$[2]$ to a flag $a-b-c$ in $A_j$ obtain $A_{j+1}$, extending the
colouring to $A_{j+1}$ in an arbitrary way whilst preserving the
Universal Axioms. For example, do as follows:

\begin{itemize}
\item Operation [0] adds a new point $c'$ to $b$. If $b$ has colour
  $i$, then for all $a''$ in $A_j$ containing $b$, paint the pair
  $(a'',c')$ with the colour $i$, if $\expt(a'',b)$ already exists in
  $A_j$. Otherwise, paint $(a'',c')$ with the colour $i+1$.
\item Operation [1] adds a new line $b'$ between $a$ and $c$. If
  $(a,c)$ has colour $i$, then paint $b'$ with the colour $i$ or the
  colour $i-1$, and see to it that each choice occurs infinitely often
  in the sequence.
\item Operation [2] adds a new plane $a'$ which contains $b$. If $b$
  has colour $i$, then for all $c''$ in $A_j$ which lie in $b$, we
  give the pair $(a',c'')$ one of the colours $i$ or $i-1$. Each
  choice should occur infinitely often.
\end{itemize}

\noindent It is easy to see that the structure obtained in this
fashion satisfies all axioms of $\cps_k$, so the theory $\cps_k$ is
consistent.

\bigskip

\begin{notation}
  Given a subset $X$ of a model of $\cps_k$, we will denote by $\fcl
  X$ the algebraic closure of $X$ in the reduct $\ps$, and by
  $\EP(X)=\{\expt(a,b)\,,\, (a, b) \in X\times X\}$ the exceptional
  points of lines and planes from $X$.
\end{notation}

\begin{remark}\label{R:acl_color}
  If the point $c$ is directly connected to a line in $X$, then
  $\fcl{X,c}=\fcl{X}\cup \{c\}$.

  In particular, if $X=\fcl X$, given $c$ in $\EP(X)$, then $X\cup
  \{c\}$ is algebraically closed in the reduct $\ps$.
\end{remark}

\begin{proof}
  In order to show that $\fcl{X,c}=\fcl{X}\cup \{c\}$, it suffices to
  consider the case when $X$ is nice. The geometry $X\cup\{c\}$ is
  either $X$ or obtained from $X$ by applying the operation $[0]$, so
  it is nice again, and thus algebraically closed.
\end{proof}

Similar to \cite[Proposition 4.26]{BMPZ14} or \cite[Proposition
3.10]{TZ17}, it is easy to see that the
collection of partial isomorphisms between $\ps$-algebraically closed
finite sets closed under exceptional points  inside two
$\aleph_0$-saturated models of $\cps_k$ is non-empty and
has the back-and-forth property, so we deduce the following:

\begin{theorem}\label{T:CPSk_vollst}
  The theory $\cps_k$ is complete. Given a set $X$ in a model of
  $\cps_k$ with $X=\fcl X$ and $\EP(X)\subset X$, then the
  quantifier-free type of $X$ determines its type.
\end{theorem}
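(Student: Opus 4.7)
The plan is a back-and-forth argument, following closely the proofs of \cite[Proposition 4.26]{BMPZ14} and \cite[Proposition 3.10]{TZ17} cited just above the statement, with the extra bookkeeping needed to carry the colouring. I fix two $\aleph_0$-saturated models $M_1,M_2$ of $\cps_k$ and consider the family $\mathcal{F}$ of partial isomorphisms $f\colon X\to X'$ between finite subsets $X\subset M_1$ and $X'\subset M_2$ satisfying $X=\fcl X$, $\EP(X)\subset X$ (and likewise for $X'$), which preserve the quantifier-free $\cps_k$-type. The empty map belongs to $\mathcal{F}$, so the family is non-empty. Once the back-and-forth property of $\mathcal{F}$ is established, completeness of $\cps_k$ follows from the resulting elementary equivalence $M_1\equiv M_2$, and the second assertion follows because any isomorphism between sets $X,X'$ as in the hypothesis of the theorem directly lies in $\mathcal{F}$.

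For the extension step, given $f\in\mathcal{F}$ and $a\in M_1\setminus X$, I first pass to the $\ps$-algebraic closure $\tilde X=\fcl{X\cup\{a\}}$, which is still finite, and then adjoin the new exceptional points to obtain $\hat X=\tilde X\cup\EP(\tilde X)$. By Remark \ref{R:acl_color}, enlarging a $\ps$-algebraically closed set by exceptional points does not create new $\ps$-algebraic elements, so $\hat X=\fcl{\hat X}$ and $\EP(\hat X)\subset\hat X$. The task is to find $\hat X'\subset M_2$ extending $X'$ with the same quantifier-free $\cps_k$-type over $X'$ as $\hat X$ has over $X$. On the level of the pseudospace reduct this is handled by the analogous back-and-forth argument of the cited references together with Fact \ref{F:qf_alg}, so the remaining work is to arrange the colouring in $M_2$ to match the one prescribed by the quantifier-free type of $\hat X$, and then invoke $\aleph_0$-saturation.

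The main obstacle is therefore the compatibility of the colouring. Since $\tilde X$ is built from $X$ by iterated applications of the operations $[0]$, $[1]$ and $[2]$ along a reduced flag path from a base-point in $X$ of some flag containing $a$, I would proceed inductively along such a path, at each step invoking the appropriate Inductive Axiom in $M_2$: infinitely many lines of each admissible colour through a given point in a plane; infinitely many planes through a given line making a prescribed point exceptional; and infinitely many new points on a given line, whose plane-colour is then forced by the Universal Axioms to be the same as in $M_1$. At each step I pick a candidate vertex in $M_2$ whose colour matches the one in $\tilde X$, and Lemma \ref{L:AP_unabh} keeps the new vertex $\ps$-independent from the previously matched data, so that no unforeseen collapse occurs. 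Once all of $\tilde X$ has been transported, the colours of the points in $\EP(\tilde X)$ are fixed by the Universal Axioms, and by construction they coincide with the colouring inherited from $M_1$. The symmetric direction works identically, giving the back-and-forth property of $\mathcal{F}$ and hence both completeness of $\cps_k$ and the type-determination statement.
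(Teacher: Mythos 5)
Your proposal is correct and follows essentially the same route as the paper, which itself only records (in the remark immediately preceding the theorem) that the partial isomorphisms between finite $\ps$-algebraically closed sets closed under exceptional points in two $\aleph_0$-saturated models form a non-empty back-and-forth system. Your extra detail on matching colours via the Inductive Axioms, with $\EP$-closedness forcing the colours of new points on existing lines, is exactly the verification the paper leaves to the reader.
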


The back-and-forth system yields an explicit description of the
algebraic closure, as well as showing that the theory $\cps_k$ is
$\omega$-stable, by a standard type-counting argument.

\begin{cor}\label{C:col_stable}
  The theory $\cps_k$ is $\omega$-stable. The algebraic closure
  $\acl(X)$ of a set $X$ is obtained by closing $\fcl X$ under
  exceptional points:
\[\acl(X)= \fcl X\cup \EP(\fcl X).\]
\end{cor}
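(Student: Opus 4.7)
The plan is to prove both statements simultaneously by showing that $Y := \fcl{X}\cup\EP(\fcl{X})$ is the full algebraic closure, and then counting types using Theorem~\ref{T:CPSk_vollst}.

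First I will show $Y\subseteq\acl(X)$. The containment $\fcl{X}=\acl_{\ps}(X)\subseteq\acl(X)$ is automatic since $\ps$ is a reduct of $\cps_k$. For each plane $a$ and line $b\subset a$ both lying in $\fcl{X}$, the Universal Axiom forces the exceptional point of $b$ in $a$ to be unique (if it exists) and the Inductive Axioms guarantee its existence, so $\expt(a,b)$ is $\emptyset$-definable from the pair $(a,b)$ and hence lies in $\acl(\fcl{X})\subseteq\acl(X)$.

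For the reverse inclusion I will verify that $Y$ meets the hypotheses of Theorem~\ref{T:CPSk_vollst}: by iterating Remark~\ref{R:acl_color} one exceptional point at a time, starting from the $\ps$-closed set $\fcl{X}$, one obtains $Y=\fcl{Y}$; and since adding exceptional points only produces new points (not new lines or planes), the line–plane pairs in $Y$ coincide with those in $\fcl{X}$, so $\EP(Y)=\EP(\fcl{X})\subseteq Y$. Now given $c\notin Y$, I will construct infinitely many realisations of $\tp(c/Y)$. Pick a finite tuple $Y_0\subseteq Y$ large enough to witness all the incidences and colours of $c$; form the finite, $\ps$-closed, $\EP$-closed set $Z:=\fcl{Y_0,c}\cup\EP(\fcl{Y_0,c})$; and produce infinitely many pairwise distinct copies $Z'$ of $Z$ over $Y_0$ inside $\mathbb U$, using the geometric Inductive Axioms of $\ps$ (infinitely many flags at each level) to vary the new vertices, and the Inductive Axioms of $\cps_k$ (infinitely many line colours of each allowed type, infinitely many incident-plane colourings, and infinitely many planes in which a given point becomes exceptional) to colour them in any admissible way. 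Theorem~\ref{T:CPSk_vollst} promotes each such isomorphism $Z\to Z'$ over $Y_0$ to a full type equality over $Y$, yielding infinitely many $c'\equiv_Y c$.

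Finally, $\omega$-stability will follow from a standard type-counting argument. Given a countable parameter set $A$, set $Y_A:=\fcl{A}\cup\EP(\fcl{A})$, which is countable. By Theorem~\ref{T:CPSk_vollst}, the type of an element $c$ over $Y_A$ is determined by the quantifier-free isomorphism type of $\fcl{A,c}\cup\EP(\fcl{A,c})$ as an extension of $Y_A$. Since in $\ps$ the set $\fcl{A,c}\setminus\fcl{A}$ is finite, and only finitely many admissible colourings of this finite piece are compatible with the Universal Axioms, there are at most $\aleph_0$ such quantifier-free types, hence $|S_1(A)|\le\aleph_0$. The main obstacle I anticipate is the infinite-conjugate step: when $c$ is, say, a new line whose colour is forced by an incidence with $Y$, one must check that infinitely many conjugates remain; this requires a short case analysis on the level of $c$ and on whether $c$ is freely attached or glued to existing vertices, but in every case the relevant Inductive Axiom of $\cps_k$ provides the required infinitely many choices.
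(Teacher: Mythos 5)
Your overall strategy is the paper's: the paper derives both claims directly from the back-and-forth system behind Theorem~\ref{T:CPSk_vollst} plus a standard type count, and your easy inclusion ($\expt(a,b)$ is the unique point of $b$ in the section $I_{i+1}(a)$, hence definable, so $\fcl X\cup\EP(\fcl X)\subseteq\acl(X)$), your verification that $Y=\fcl X\cup\EP(\fcl X)$ is $\fcl{\cdot}$-closed and $\EP$-closed via Remark~\ref{R:acl_color}, and your counting argument over a countable set are in order.

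The gap is in the step ``Theorem~\ref{T:CPSk_vollst} promotes each such isomorphism $Z\to Z'$ over $Y_0$ to a full type equality over $Y$.'' The theorem only yields $c'\equiv_{Y_0}c$: agreement of types over a finite subset $Y_0$ of $Y$ does not by itself give agreement over $Y$, and the copies $Z'$ produced by the Inductive Axioms may have unintended incidences with, or exceptional-point coincidences over, $Y\setminus Y_0$. What is really needed is a quantifier-free isomorphism between the closed sets $\fcl{Y,c}\cup\EP(\fcl{Y,c})$ and $\fcl{Y,c'}\cup\EP(\fcl{Y,c'})$ which is the identity on $Y$, not just on $Y_0$. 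This is repairable in two standard ways: either first reduce to $X$ finite by finite character of the algebraic closure (then $Y$ is finite, you may take $Y_0=Y$, and the issue disappears), or choose $Z'$ to be $\ps$-independent from $Y$ over $Y_0$ and check that no new direct connections or exceptional identifications with $Y\setminus Y_0$ arise (Remark~\ref{R:ver_acl}). Relatedly, the ``short case analysis'' you defer is where the substance lies: you must ensure the image of $c$ itself takes infinitely many distinct values, in particular in the case $c=\expt(a,b)$ for a line--plane pair not entirely inside $Y$, where $c$ is rigid over $(a,b)$ and one must instead vary $b$ (or $a$) and verify that the resulting exceptional points are pairwise distinct. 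This dichotomy --- a one-vertex extension of a finite closed set is either an exceptional point (unique) or free (infinitely many realisations by the Inductive Axioms) --- is exactly what the paper's back-and-forth system encodes, and it should be made explicit rather than deferred.
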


We deduce the following characterisation of forking over (colored)
algebraically closed sets.
 \begin{cor}\label{C:color_Forking}
   Let $X$ and $Y$ two supersets of an algebraically closed set
   $Z=\acl(Z)$ in $\cps_k$. We have that \[X\ind^{\cps_k}_Z Y\] if and
   only if
\begin{itemize}
\item $X\ind^{\ps}_Z Y$, and
\item $\EP(\fcl X)\cap \EP(\fcl Y)\subset Z$.
\end{itemize}
Types over algebraically closed sets are stationary, that is, the
theory $\cps_k$ has weak elimination of imaginaries.
\end{cor}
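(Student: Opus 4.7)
I define $\ind^*$ on $\cps_k$-algebraically closed supersets of $Z$ by the right-hand side conditions, and show that it coincides with forking independence $\ind^{\cps_k}$; weak elimination of imaginaries, i.e., stationarity of types over algebraically closed sets, will fall out as the stationarity of $\ind^*$. First I reduce to $X$ and $Y$ being $\cps_k$-algebraically closed, in which case Corollary \ref{C:col_stable} gives $\fcl X = X$, $\EP(X)\subseteq X$ and analogously for $Y$, so the $\EP$-condition simplifies to $\EP(X)\cap \EP(Y)\subseteq Z$.

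The forward direction is short. From $X\ind^{\cps_k}_Z Y$ I derive $X\cap Y = Z$: any $v\in X\cap Y$ outside $Z$ would have an algebraic $\cps_k$-type over $ZY$ but a non-algebraic one over $Z$, contradicting non-forking. This yields $\EP(X)\cap \EP(Y)\subseteq X\cap Y = Z$ directly, and the perfect triviality of $\ps$-forking — under which $A\ind^{\ps}_C B$ is equivalent to $\acl^{\ps}(AC)\cap \acl^{\ps}(BC)=\acl^{\ps}(C)$ on $\ps$-algebraically closed $C$ — then forces $X\ind^{\ps}_Z Y$.

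The converse reduces to stationarity of $\ind^*$: given $X\equiv_Z X'$, both $\cps_k$-algebraically closed, with $X,X'\ind^*_Z Y$, I want $X\equiv_{ZY} X'$. From $X\ind^{\ps}_Z Y$ and perfect triviality, $X\cap Y = Z$, and similarly $X'\cap Y = Z$. Remark \ref{R:ver_acl}(\ref{R:ver_acl:acl}) then gives $\fcl{X\cup Y}=X\cup Y$, and Remark \ref{R:ver_acl}(\ref{R:ver_acl:direct}) rules out direct flags with one end in $X\setminus Z$ and the other in $Y\setminus Z$; hence every exceptional point of $X\cup Y$ arises from a flag internal to $X$ or internal to $Y$, so $\EP(X\cup Y)\subseteq X\cup Y$. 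Thus $X\cup Y$ and, by the same argument, $X'\cup Y$ are $\cps_k$-algebraically closed. By Theorem \ref{T:CPSk_vollst} it now suffices to produce a $\cps_k$-isomorphism $\tau\colon X\cup Y\to X'\cup Y$ fixing $Y$ pointwise and extending the given $\cps_k$-iso $\sigma\colon X\to X'$ over $Z$.

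The underlying $\ps$-isomorphism is supplied by $\ps$-stationarity (coming from equationality of $\ps$) applied to the independent copies $X,X'$ of the $\ps$-type of $X$ over $Z$, using that $Y$ is $\ps$-algebraically closed. Colour agreement is automatic inside $X$, inside $X'$ and inside $Y$; the main obstacle is the \emph{mixed} case. For an incidence $(a,c)$ with $a\in X\setminus Z$ a plane and $c\in Y\setminus Z$ a point in $a$ (and the symmetric case), Remark \ref{R:ver_acl}(\ref{R:ver_acl:pp}) provides a line $b\in Z$ in $a$ through $c$, and the Universal Axiom fixes the colour of $(a,c)$ from the colour of $b$ (which lies in $Z$), modified only by whether $c=\expt(a,b)$. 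Since $\sigma$ fixes $Z$ pointwise and commutes with the $\emptyset$-definable function $\expt$, and since $\expt(a,b)\in X$ can equal $c\in Y$ only inside $X\cap Y = Z$, the colour of $(a,c)$ matches the colour of $(\sigma(a),c)$. Combining this stationarity with the existence of nonforking extensions in the $\omega$-stable theory $\cps_k$ yields $\ind^* = \ind^{\cps_k}$ and the weak elimination of imaginaries.
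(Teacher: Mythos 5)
The central gap is your opening move, the ``reduction to $X$ and $Y$ being $\cps_k$-algebraically closed''. The left-hand side is indeed invariant under replacing $X,Y$ by $\acl(X),\acl(Y)$, but the right-hand side is not: the hypotheses refer to $\fcl X$ and $\fcl Y$, and to perform your reduction you would have to show that $X\ind^{\ps}_Z Y$ together with $\EP(\fcl X)\cap\EP(\fcl Y)\subset Z$ already yields $\acl(X)\ind^{\ps}_Z\acl(Y)$, i.e.\ that $\ps$-independence survives adjoining the exceptional points. That is precisely the step the paper carries out with Lemma \ref{L:AP_unabh}, and it is where the only delicate configuration lives: an exceptional point $\expt(a,b)$ with $a\in\fcl X\setminus Z$ and $b\in Z$ could a priori lie in $\fcl Y\setminus Z$ without belonging to $\EP(\fcl Y)$, and this possibility has to be confronted before the two sides may be treated as EP-closed; you never address it. Note moreover that after your reduction the second bullet becomes vacuous: for $\cps_k$-closed $X,Y$ one has $\EP(X)\subset X$, $\EP(Y)\subset Y$, and $X\cap Y=Z$ already follows from $X\ind^{\ps}_Z Y$, so what your argument establishes is only the statement for closed sets, which is strictly weaker than the stated equivalence. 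For that closed case your stationarity argument (the union is $\fcl$- and EP-closed by Remark \ref{R:ver_acl}, mixed sections are forced by a line in $Z$ via Remark \ref{R:ver_acl} (\ref{R:ver_acl:pp}) and the impossibility of $c=\expt(a,b)$ outside $Z$, conclude by Theorem \ref{T:CPSk_vollst}) is essentially the paper's and is fine.

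The forward direction is also unsound as written. Perfectly trivial forking does not say that $A\ind^{\ps}_C B$ is equivalent to $\acl^{\ps}(AC)\cap\acl^{\ps}(BC)=\acl^{\ps}(C)$; this equivalence fails badly in $\ps$ (a point and a line through it, or two planes sharing a line, have disjoint closures but are dependent), so ``forces $X\ind^{\ps}_Z Y$'' does not follow. The paper obtains this implication from the fact that non-forking in $\cps_k$ implies non-forking in the reduct $\ps$ over algebraically closed sets because $\ps$ has weak elimination of imaginaries, citing \cite[Lemme 2.1]{BMPW15}; some argument of this kind is indispensable, and its absence also infects your final step, since you need the nonforking copy of $X$ over $ZY$ to satisfy the right-hand conditions before your uniqueness statement can be applied. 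A minor further point: stationarity of $\ps$-types over algebraically closed sets comes from weak elimination of imaginaries, not from equationality.
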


\begin{proof}
  Since $\ps$ has weak elimination of imaginaries, we have that
  non-forking in $\cps_k$ implies nonforking in the reduct $\ps$ over
  algebraically closed sets, by \cite[Lemme 2.1]{BMPW15}. Clearly
  $\EP(\fcl X)\cap \EP(\fcl Y)\subset Z$.

  \noindent For the other direction, we may assume that $X=\fcl X$ and
  $Y=\fcl Y$. Lemma \ref{L:AP_unabh} yields that
  \[ X \cup \EP(X)\ind^{\ps}_Z Y \cup \EP(Y).\]  Since $\acl(X)=X \cup
  \EP(X)$, Remark \ref{R:ver_acl} implies that the set $\acl(X) \cup
  \acl(Y)$ is algebraically closed in $\ps$. We need only show that it
  contains all exceptional points, so it determines a unique type in
  the stable theory $\cps_k$. If $c$ is an exceptional point of a
  plane $a$ and a line $b$ in $\acl(X)\cup \acl(Y)$, we may assume
  that $a$ lies in $X$ and $b$ lies in $Y$. Since $a$ and $b$ are
  directly connected and $X \ind^{\ps}_Z Y$, Remark \ref{R:ver_acl}
  implies that $a$ or $b$ lies in $Z$. Therefore $c$ lies in
  $\EP(X)\cup\EP(Y)$ and hence is contained in $\acl(X)\cup \acl(Y)$,
  as desired.
\end{proof}

\begin{cor}\label{C:forking_acl}
  Let $X$, $Y$ and $Z=\acl(Z)$ be sets such that \[X\ind_Z Y.\] Then
  $\fcl{X,Y}\cap\acl(X,Z)=\fcl{X,Y}\cap\fcl{X,Z}$.
\end{cor}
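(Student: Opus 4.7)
The inclusion $\fcl{X,Y}\cap\fcl{X,Z}\subseteq\fcl{X,Y}\cap\acl(X,Z)$ is immediate from $\fcl{X,Z}\subseteq\acl(X,Z)$ (Corollary \ref{C:col_stable}), so I focus on the reverse inclusion. Write $A=\fcl{X,Z}$ and $B=\fcl{Y,Z}$; both are $\ps$-algebraically closed and contain $Z$, which is itself $\ps$-algebraically closed because $\fcl{Z}\subseteq\acl(Z)=Z$. The plan has two steps: first, show that $\fcl{X,Y}\subseteq A\cup B$; second, show that an element of $\fcl{X,Y}\cap\acl(X,Z)$ lying in $B\setminus A$ would necessarily sit in $Z\subseteq A$, a contradiction.

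For the first step, Corollary \ref{C:color_Forking} converts $X\ind_Z Y$ in $\cps_k$ into $X\ind^{\ps}_Z Y$, and hence into $A\ind^{\ps}_Z B$ by preservation of nonforking under $\ps$-algebraic closure. Since $Z\subseteq A\cap B$ and the reduct $\ps$ is $\omega$-stable, Morley rank monotonicity strengthens this to $A\ind^{\ps}_{A\cap B} B$. The intersection $A\cap B$ is $\ps$-algebraically closed, so Remark \ref{R:ver_acl}(\ref{R:ver_acl:acl}) yields that $A\cup B$ is $\ps$-algebraically closed, whence $\fcl{X,Y}\subseteq\fcl{X,Y,Z}=\fcl{A\cup B}=A\cup B$.

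For the second step, take $w\in\fcl{X,Y}\cap\acl(X,Z)$ and assume $w\notin A$, so that $w\in B\subseteq\acl(Y,Z)$. From $w\in\acl(X,Z)$ one has $w\ind_{XZ}Y$; combining with $X\ind_Z Y$ by transitivity of nonforking in the $\omega$-stable theory $\cps_k$ (Corollary \ref{C:col_stable}) gives $w\ind_Z Y$. Since $w\in\acl(Y,Z)$, the nonforking extension $\tp(w/YZ)$ of $\tp(w/Z)$ is algebraic, which forces $\tp(w/Z)$ itself to be algebraic. Hence $w\in\acl(Z)=Z\subseteq\fcl{X,Z}=A$, contradicting $w\notin A$.

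The delicate point I expect to have to handle carefully is the upgrade of the base from $Z$ to $A\cap B$ in the $\ps$-independence, needed to apply Remark \ref{R:ver_acl}; once this is in place, the rest is a routine application of the standard stability fact that an element algebraic over each side of a nonforking pair already lies in the common algebraically closed base.
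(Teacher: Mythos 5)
Your proof is correct, but it takes a somewhat different route from the paper's. The paper argues in one pass: from $\xi\in\acl(X,Z)$ and $X\ind_Z Y$ it gets $\xi,X\ind_Z Y$, transfers this to the reduct via Corollary \ref{C:color_Forking}, rebases to obtain $\xi\ind^{\ps}_{X,Z}X,Y$, and concludes $\xi\in\fcl{X,Z}$ from $\xi\in\fcl{X,Y}$ --- a purely forking-calculus argument whose only input about the colouring is the transfer of independence to $\ps$. You instead first prove the stronger-looking structural fact $\fcl{X,Y}\subseteq\fcl{X,Z}\cup\fcl{Y,Z}$, using the transfer plus Remark \ref{R:ver_acl}(\ref{R:ver_acl:acl}) (union of independent $\ps$-closed sets is $\ps$-closed), and then eliminate the case $w\in\fcl{Y,Z}\setminus\fcl{X,Z}$ by a standard computation in $\cps_k$ ($w\ind_{XZ}Y$ plus transitivity gives $w\ind_Z Y$, and algebraicity over $YZ$ then forces $w\in Z$). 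Both steps are sound; minor points: what you call ``Morley rank monotonicity'' is just base monotonicity of nonforking (and in fact $A\cap B=Z$ here, since any element of $A\cap B$ is algebraic over $B$ and independent from $B$ over $Z$), and Corollary \ref{C:color_Forking} is stated for supersets of $Z$, so one should formally apply it to $X\cup Z$ and $Y\cup Z$ --- the same harmless adjustment the paper makes implicitly. The trade-off: the paper's proof is shorter and does not need the geometric Remark \ref{R:ver_acl}, whereas your decomposition gives the extra information that $\fcl{X,Y}$ splits into the two sides, at the cost of invoking pseudospace-specific structure.
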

\begin{proof}
  Let $\xi$ be in $\fcl{X,Y}\cap\acl(X,Z)$. The independence \[X\ind_Z
  Y\] yields that \[\xi , X \ind_Z Y.\] It follows from Corollary
  \ref{C:color_Forking} that
  \[ \xi , X \ind^{\ps}_Z Y,\]
  and thus \[ \xi \ind^{\ps}_{X, Z} X,Y.\] Since $\xi$ lies in
  $\fcl{X,Y}$, the above independence implies that $\xi$ lies in
  $\fcl{X,Z}$, as desired.
\end{proof}
\begin{prop}\label{P:ext_Iso}
  Let $X=\fcl X$ and $Y=\fcl Y$ be two subsets of a model of $\cps_k$.
  A map $F:X \to Y$ is elementary with respect to the theory $\cps_k$
  if and only if it satisfies the following conditions:
  \begin{enumerate}
  \item The map $F$ is a partial isomorphism with respect to the
    reduct $\ps$.
  \item The function $F$ preserves colours of lines and sections.
  \item\label{Cond:iso} For all $a$, $a'$ and $b$ in $X$, we have that
    $\expt(a,b)=\expt(a',b)$ if and only if
    $\expt(F(a),F(b))=\expt(F(a'),F(b))$.
\end{enumerate}
\end{prop}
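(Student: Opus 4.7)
The forward direction is immediate: each of (1)--(3) is expressible by partial types in the language of $\cps_k$, so any elementary map satisfies them.

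For the converse, the plan is to invoke Theorem \ref{T:CPSk_vollst}. Given $F : X \to Y$ satisfying (1)--(3), I will extend it to a bijection $\hat F : \acl(X) \to \acl(Y)$ preserving the quantifier-free type. By Corollary \ref{C:col_stable} we have $\acl(X) = X \cup \EP(X)$, and analogously for $Y$, so the natural extension sets $\hat F|_X = F$ together with $\hat F(\expt(a,b)) := \expt(F(a), F(b))$ for all $a, b \in X$ for which $\expt(a,b)$ is defined.

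The first task is well-definedness and injectivity. If $\expt(a,b) = \expt(a',b)$, condition (3) gives $\expt(F(a),F(b)) = \expt(F(a'),F(b))$ directly. If instead $\expt(a,b) = \expt(a',b')$ with $b \neq b'$, the common exceptional point lies on two distinct lines of $X = \fcl X$ and so already belongs to $\fcl X = X$; in that case, using condition (2) and the fact that $\expt$ is definable from the colour of $b$ and of the pair $(a,\expt(a,b))$, one checks that $F(\expt(a,b)) = \expt(F(a), F(b))$, so the extension is consistent with $F$ on $X$. Injectivity is symmetric, via the converse direction of (3).

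The main step is to verify that $\hat F$ preserves the quantifier-free type, which via Theorem \ref{T:CPSk_vollst} will imply that $\hat F$, and hence $F$, is elementary. For the $\ps$-reduct, a new exceptional point $c = \expt(a,b) \in \EP(X) \setminus X$ is incident inside $\acl(X)$ only to the line $b$, since any further line $b' \in X$ through $c$ would place $c$ in $\fcl X = X$. Thus the graph structure is fully accounted for, and line colours are preserved by (2) since no new lines occur in $\EP(X)$. The delicate point is the pair colouring $I_i(a', c)$ for planes $a' \in X$ containing $c$: such an $a'$ must contain $b$, and the colour of $(a', c)$ equals the colour $i$ of $b$ when $c \neq \expt(a', b)$, and $i+1$ otherwise. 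Condition (3) preserves the exceptional status between $(a', b)$ and $(F(a'), F(b))$, while condition (2) preserves the colour of $b$, so the pair-colour is correct on the image side. This pair-colouring check is the main obstacle, and is precisely what condition (3) is tailored to enable, reconciling the behaviour of exceptional points across all planes sharing a common line.
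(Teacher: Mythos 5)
Your proposal is correct and follows essentially the same route as the paper: extend $F$ to $\acl(X)=X\cup\EP(X)$ by $\expt(a,b)\mapsto\expt(F(a),F(b))$, check well-definedness via the two cases $b=b'$ (condition (3)) and $b\neq b'$ (the point then lies in $X$ and colour preservation pins down its image), and reduce the quantifier-free check to the pair-colouring of new exceptional points, concluding by Theorem \ref{T:CPSk_vollst}. The one claim you assert without justification --- that any plane $a'\in X$ containing the new point $\expt(a,b)$ must contain $b$ --- is not covered by your remark about lines of $X$ through $c$ (the witnessing line inside $a'$ need not lie in $X$); the paper closes this by noting that the intersection of $a$ and $a'$ cannot be the single point $\expt(a,b)$ (else it would lie in $\fcl{a,a'}\subseteq X$), hence is a line of $X$ through that point, which must then equal $b$.
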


\begin{proof}
  We need only show that $F$ is elementary, if it satisfies all three
  conditions. By Theorem \ref{T:CPSk_vollst}, it suffices to show that
  $F$ extends to a partial isomorphism $\tilde F$ preserving colours
  between $\acl(X)=X \cup \EP( X)$ and $\acl(Y)=Y \cup \EP( Y)$.

  For each line $b$ in $X$ contained in a plane $a$ of $X$, set
  $\tilde F (\expt(a,b))=\expt(F(a), F(b))$. Let us first show that
  $\tilde F$ is well-defined, which analogously yields that $\tilde F$
  is a bijection. Suppose that $\expt(a,b)=\expt(a_1, b_1)$, for a
  line $b_1$ contained in the plane $a_1$, both in $X$. If $b\neq
  b_1$, then $\expt(a,b)$ is the unique intersection of $b$ and $b_1$,
  both lines in $X$, so $\expt(a,b)$ lies in $X$ and hence its image
  is determined by $F$. Otherwise, we conclude that $b=b_1$, and thus
  $\tilde F$ is bijective, by Condition $(\ref{Cond:iso})$.

  Similarly, the map $\tilde F $ defined above is a partial
  isomorphism with respect to the reduct $\ps$. We need only show that
  $\tilde F$ preserves the colours of sections. Choose a new point
  $\expt(a,b)$ not in $X$ and an arbitrary plane $a_1\neq a$ in $X$
  containing $\expt(a,b)$. Since $\expt(a,b)$ does not lie in $X$, the
  intersection of $a$ and $a_1$ cannot solely consist of the point
  $\expt(a,b)$. Hence, the intersection of $a$ and $a_1$ is given by a
  unique line $b_1$, which lies in $X$ and contains $\expt(a, b)$. We
  conclude as before that $b=b_1$. The colour of $\expt(a,b)$ in $a_1$
  is uniquely determined according to whether $\expt(a,
  b)=\expt(a_1,b)$, and thus so is the colour of its image in $F(a_1)$
  by $\tilde F$, by Condition $(\ref{Cond:iso})$.

\end{proof}
\section{Colored paths}\label{S:ColPath}

We will now show that the theory $\cps_k$ is not \icyc, and hence it is not
equational, yet every proper cycle of types has length at least $k$
(cf. Theorem \ref{T:path_atleast_k}), so we expect the complexity of
these theories to increase as $k$ grows. However, we do not know
whether two of these theories are bi-interpretable.

\begin{theorem}\label{T:path_k}
  In $\cps_k$ there is a proper cycle of types \[ p_0(x;y)\pf
  p_1(x;y)\pf \ldots \pf p_{k-1}(x;y)\pf p_0(x;y),\]
  \noindent where both the variables $x$ and $y$ have length $1$. In
  particular, the theory $\cps_k$ is not equational.
\end{theorem}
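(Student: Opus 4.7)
The plan is to exhibit $k$ pairwise distinct complete types in single variables and arrows $p_l\pf p_{l+1\bmod k}$ between consecutive ones, using the point colourings $I_l$ and the exceptional-point mechanism as the transition. I would define $p_l(x,y)$, for $0\le l<k$, to be the complete type over $\emptyset$ of a pair $(c,\pi)$ in which $c$ is a point, $\pi$ a plane, $I_l(\pi,c)$ holds, and $\acl(c,\pi)=\{c,\pi\}$. This type is well defined: by the axioms of $\cps_k$ no specific line through $c$ in $\pi$ is algebraic over $\{c,\pi\}$, so $\fcl{c,\pi}=\{c,\pi\}$ contains no line, $\EP(\fcl{c,\pi})=\emptyset$, and Corollary~\ref{C:col_stable} together with Theorem~\ref{T:CPSk_vollst} pin down a unique completion. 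The $p_l$ are pairwise distinct because they disagree on which $I_l$ holds.

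To produce the arrows I would verify $p_l\pf p_{l+1\bmod k}$ for each $l$ via Lemma~\ref{L:pf_seq}. Fix a line $\ell$ of colour $l$; since $\cps_k$ is $\omega$-stable with weak elimination of imaginaries (Corollaries~\ref{C:col_stable} and~\ref{C:color_Forking}) and $\acl(\ell)=\{\ell\}$, the type over $\ell$ of a pair $(c,\pi)$ in which $c$ is a generic point on $\ell$ and $\pi$ is a plane containing $\ell$ with $c=\expt(\pi,\ell)$ is stationary (the inductive axioms provide infinitely many such $\pi$). Take a Morley sequence $(c_i,\pi_i)_{i\in\N}$ of this type over $\ell$; it is indiscernible over $\ell$, hence over $\emptyset$. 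On the diagonal, $c_i=\expt(\pi_i,\ell)$ and $\ell$ has colour $l$, so $I_{l+1}(\pi_i,c_i)$ and $(c_i,\pi_i)\models p_{l+1}$.

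Off the diagonal, Morley-independence yields $c_i\ind_\ell(c_j,\pi_j)$, hence $c_i\ne c_j=\expt(\pi_j,\ell)$; since $c_i\in\ell\subset\pi_j$ is non-exceptional on $\ell$ in $\pi_j$, one gets $I_l(\pi_j,c_i)$. The crucial observation is that $\ell$ is just one of infinitely many colour-$l$ lines through $c_i$ in $\pi_j$, so $\ell\notin\acl(c_i,\pi_j)$ and thus $\acl(c_i,\pi_j)=\{c_i,\pi_j\}$ by Corollary~\ref{C:col_stable}; consequently $(c_i,\pi_j)\models p_l$. Lemma~\ref{L:pf_seq} then delivers $p_l\pf p_{l+1}$, and iterating for $l=0,\ldots,k-1$ produces the proper cycle $p_0\pf p_1\pf\cdots\pf p_{k-1}\pf p_0$. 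Non-equationality of $\cps_k$ follows from Remark~\ref{R:eq_icon}.

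The main obstacle is precisely the third step: certifying that $(c_i,\pi_j)$, for $i\ne j$, realises the \emph{generic} type $p_l$ rather than some other complete type carrying the correct colour. Everything hinges on controlling the algebraic closure so that the common line $\ell$, present in the background construction, does not enter $\acl(c_i,\pi_j)$; this in turn rests on the axiomatic abundance of lines through a prescribed point in a plane together with the independence built into the Morley construction.
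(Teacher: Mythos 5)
Your proposal is correct and is essentially the paper's argument: you use the same types $p_l$ (the unique type of an $I_l$-coloured point--plane pair, unique because such a pair is algebraically closed), and the same mechanism for $p_l\pf p_{l+1}$, namely pivoting on a colour-$l$ line whose exceptional point shifts the section colour by one while the line itself stays independent from the data. The only difference is packaging: you build an explicit Morley sequence over the line and invoke Lemma \ref{L:pf_seq}, whereas the paper gets the arrow in one step from strong-type equality and independence over the line via Corollary \ref{C:Kern_Pf}, which amounts to the same construction.
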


\begin{proof}
  For each $0\leq r<k$, a pair $(a, c)$ with colour $I_r$ has a unique
  type $p_r=\tp(c,a)$, for the set $\{a, c\}$ is algebraically closed,
  since it is the intersection of all the flags containing $a$ and
  $c$, and it is closed under exceptional points, for it contains no
  line. Clearly $p_r\neq p_{r+1}$, for each $0\leq r<k$.

  It suffices to show that $p_r\pf p_{r+1}$: Let $(a, c)$ with colour
  $I_r$, and choose a line $b$ connecting them with colour $r$. Let
  $c'$ be the exceptional point of $b$ in $a$, so $(c',a) \models
  p_{r+1}$. Now, the set $\{b\}$ is algebraically closed in $\cps_k$.
  By Corollary \ref{C:color_Forking}, the points $c$ and $c'$ have the
  same strong type over $b$, and
  \[ c\ind_b a. \]
  Corollary \ref{C:Kern_Pf} implies that $p_r=\tp(c,a) \pf
  \tp(c',a)=p_{r+1}$, as desired.
 \end{proof}

 \begin{theorem}\label{T:path_atleast_k}
   Let $x$ and $y$ be finite tuples of variables. In $\cps_k$, every
   proper cycle of types \[ p_0(x;y)\pf p_1(x;y)\pf \ldots \pf
   p_{n-1}(x;y)\pf p_0(x;y),\]
   \noindent has length $n\geq k$.
 \end{theorem}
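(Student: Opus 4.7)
The plan is to reduce the problem to a purely colour-theoretic analysis and close with a counting argument modulo $k$. I would first exploit the equationality of the pseudospace theory $\ps$ (recalled at the end of Section \ref{S:PS}): by Remark \ref{R:eq_icon}, $\ps$ is \icyc. Since the forgetful reduct $\cps_k \to \ps$ preserves indiscernible sequences, and hence the arrow relation $\pf$, any proper cycle $p_0 \pf p_1 \pf \dotsb \pf p_{n-1} \pf p_0$ in $\cps_k$ induces a cycle of its $\ps$-reducts that cannot be proper. Consequently, all $p_r$ share the same $\ps$-reduct, and the cycle records a purely colour-theoretic phenomenon on a fixed $\ps$-skeleton.

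Next, I would realise the cycle through Corollary \ref{C:Kern_Pf}, obtaining tuples $b$, $a_0, \dotsc, a_n$ and algebraically closed sets $C_0, \dotsc, C_{n-1}$ with $(a_r, b) \models p_r$ for $r < n$, $(a_n, b) \models p_0$, $a_r \stackrel{\mathrm{stp}}{\equiv}_{C_r} a_{r+1}$, and $a_r \ind_{C_r} b$. Fixing one step with $a = a_r$, $a' = a_{r+1}$ and $C = C_r$, the independence $a \ind_C b$ combined with Corollary \ref{C:color_Forking} and Remark \ref{R:ver_acl} ensures that $\acl(Ca) \cup \acl(Cb)$ is algebraically closed in $\cps_k$, with every line lying entirely in one of the two sides and every crossing plane-point incidence mediated by a line in $C$; the colouring is thus determined side-by-side. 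By contrast, $a' \ind_C b$ may fail, and a point generic on some line in the $(a,b)$-configuration can be forced to coincide with the (algebraic) exceptional point of that line in a given plane. By the Universal Axiom of $\cps_k$, each such identification shifts the corresponding section colour $I_i \mapsto I_{i+1}$, i.e.\ by $+1$ in $\Z/k\Z$.

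The final and most delicate step is to package these single-arrow shifts into a $\Z/k\Z$-valued invariant $\chi$ on the types of the cycle such that every arrow between distinct types contributes the residue $+1$ to $\chi$. Once established, the cycle's closure gives $\chi(p_0) = \chi(p_0) + n$ in $\Z/k\Z$, forcing $k \mid n$ and hence $n \geq k$. The main obstacle is that a single arrow may promote several incidences to exceptional simultaneously, so one must either isolate a canonical incidence governing the shift (for instance the minimal one in some fixed enumeration of the $\ps$-skeleton at which the colourings of $(a,b)$ and $(a',b)$ disagree) or assemble a weighted sum of colour changes whose net value per arrow is a prescribed non-zero residue. The rigidity of $\Z/k\Z$ then delivers the length bound.
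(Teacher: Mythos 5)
Your opening moves coincide with the paper's: equationality of the reduct $\ps$ forces the colourless reducts of all $p_r$ to agree, and Corollary \ref{C:Kern_Pf} realises the cycle by tuples $e_0,\dots,e_n$, $f$ and algebraically closed sets $Z_r$ with $e_r\equiv_{Z_r}e_{r+1}$ and $e_r\ind_{Z_r}f$. The genuine gap is exactly the step you yourself flag as ``most delicate'': you never produce the $\Z/k\Z$-valued bookkeeping, and neither of your two suggested routes is carried out. Moreover, the invariant you ask for (a map $\chi$ on the types of the cycle to which every arrow contributes exactly $+1$) is stronger than what the local analysis can deliver: along a single arrow some plane--point pairs change section colour and others do not, so no residue is attached to the type itself, and ``isolating a canonical incidence'' begs the question of why the chosen incidences at consecutive arrows line up coherently. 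Without this step the counting modulo $k$ never gets started, so as it stands the proposal does not prove the bound.

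For comparison, the paper closes this gap as follows, and each ingredient is absent from your sketch. Setting $X_r=\acl(e_r)$, $Y=\acl(f)$ and $P_r=\fcl{X_rY}$, one builds colourless isomorphisms $F_r\colon P_r\to P_{r+1}$ fixing $Y$ which are $\cps_k$-elementary on $\fcl{X_rZ_r}$ and on $\fcl{YZ_r}$ separately; Lemma \ref{L:pairwise} is needed to merge the types over $Y$ and over $Z_r$. A count of the defect $\de(P_r)=|\acl(P_r)\setminus \fcl{P_r}|$, which is non-increasing along the cycle and returns to its initial value, shows that each $F_r$ preserves the identification pattern $\expt(a,b)=\expt(a',b)$; only then does Proposition \ref{P:ext_Iso} force $F_r$ to move some section colour (line colours being preserved side by side). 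For such a pair $(a,c)$ of colour $j$, the connecting line $b$ lies in $Z_r$, is fixed by $F_r$, and $c\neq\expt(a,b)$ by Corollary \ref{C:color_Forking}, so the colour of $(F_r(a),F_r(c))$ can only be $j$ or $j+1$: shifts are monotone by $+1$. The conclusion is reached not via a type invariant but by tracking one pair whose colour changes under $F_0$ through the composite $F^n=F_n\circ\dots\circ F_0$, which is the identity on $P_0$: the total shift is a positive multiple of $k$ and at most $n$, whence $n\geq k$. Incidentally, your picture of a generic point being forced to coincide with an exceptional point when $a'\ind_C b$ fails is the mechanism behind the construction of the cycle in Theorem \ref{T:path_k}, not an ingredient of the lower bound.
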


 \begin{proof}
   A proper cycle of types $p_0(x;y)\pf p_1(x;y)\pf \ldots \pf
   p_{n-1}(x;y)\pf p_0(x;y)$ as above induces a cycle in the reduct
   $\ps$, which is equational. Therefore, the colourless reducts of
   $p_r$ and $p_s$ agree, for all $r$, $s$.

   Corollary \ref{C:Kern_Pf} implies that there are tuples $f$,
   $e_0,\ldots, e_n$ and algebraically closed subsets $Z_0, \ldots,
   Z_{n-1}$ such that:
   \begin{itemize}
   \item $\models p_r(e_r,f)$, for $0\leq r\leq n-1$, and
     $\models p_0(e_n,f)$.
   \item $e_r\equiv_{Z_r} e_{r+1}$ for $0\leq r\leq n-1$.
   \item $e_r\ind_{Z_r} f$ for $0\leq r\leq n-1$.
   \end{itemize}

   Set $Y=\acl(f)$, $X_r=\acl (e_r)$, for $0\leq r\leq n$. Since the
   definable and algebraic closure coincide, and the colourless
   reducts of all $p_r$ agree, all the types $\tp^{\ps}(X_rY)$ are
   equal. Denote $\fcl {X_r Y}$ by $P_r$. We find colourless
   isomorphisms \[F_r: P_r\to P_{r+1}, \] which fix $Y$ pointwise.
   Note that $X_r$ and $X_{r+1}$ have the same type over $Z_r$, for
   $r\leq n-1$. Lemma \ref{L:pairwise} yields that $\tp^\ps(X_rYZ_r)
   =\tp^\ps(X_{r+1}YZ_r)$, for $r\leq n-1$. The above map $F_r$
   extends to a colourless isomorphism between $\fcl{X_rYZ_r}$ and
   $\fcl{X_{r+1}YZ_r}$, which is the identity on $\fcl{YZ_r}$. We will
   still refer to this colourless isomorphism as $F_r$, keeping in
   mind that it is elementary in the sense of $\cps_k$ on
   $\fcl{X_rZ_r}$ and (clearly) on $\fcl{YZ_r}$ separately. Observe
   that
   \[\fcl{X_rYZ_r}=\fcl{X_rZ_r}\cup \fcl{YZ_r},\]
   by the Remark \ref{R:ver_acl} (\ref{R:ver_acl:acl}).

   If a set $W$ is finite, so are the closures $\fcl{W}$ and
   $\acl(W)$. Define its \emph{defect} as the natural number
   \[\de(W)=|\acl(W)\setminus \fcl{W}|= | \EP(\fcl W) \setminus \fcl W|.\]

   \begin{claimstern}
     For each $r\leq n-1$, we have that $\de(P_r)\geq \de(P_{r+1})$.
   \end{claimstern}
   \begin{claimsternproof}
     Whenever $\expt(a,b)=\expt(a_1,b_1)$, for $b$ and $b_1$ in $P_r$,
     with $b\neq b_1$, then the point $\expt(a,b)$ lies in $P_r$ by
     \ref{R:ver_acl} (\ref{R:ver_acl:inf}). Thus, it suffices to show
     the following:
     \begin{enumerate}
     \item\label{C:1st} Whenever the line $b$ in $P_r$ lies in the
       plane $a$ in $P_r$, with $\expt(a,b)$ in $P_r$, then
       $\expt(F_r(a), F_r(b))$ lies in $P_{r+1}$.
     \item\label{C:2nd} Whenever $a$, $a_1$ and $b$ lie in $P_r$ and
       $\expt(a,b)=\expt(a_1,b)$, then
       $\expt(F_r(a),F_r(b))=\expt(F_r(a_1),F_r(b))$.
     \end{enumerate}
     For $(\ref{C:1st})$, since $X_r \ind_{Z_r} Y$, the plane $a$ and
     the line $b$ must both lie in the same set $P_r\cap \fcl{X_rZ_r}$
     or in $P_r\cap \fcl{YZ_r}$, by the independence
     \[a\ind^{\ps}_{Z_r} b\]
     \noindent and Remark \ref{R:ver_acl} (\ref{R:ver_acl:direct}).
     For example, let $a$ and $b$ lie in $P_r\cap \fcl{X_rZ_r}$, so
     $\expt(a,b)$ lies in $P_r\cap\acl(X_rZ_r)=P_r\cap \fcl{X_rZ_r}$,
     by Corollary \ref{C:forking_acl}. Since $F_r$ is elementary on
     $P_r\cap \fcl{X_rZ_r}$, we have that $\expt(F_r(a),
     F_r(b))=F_r(\expt(a,b))$ lies in $P_{r+1}$, as desired. Observe
     that we have actually shown that \[\expt(a,b) \in P_r
     \Longleftrightarrow \expt(F_r(a), F_r(b))\in P_{r+1}.\]

     \noindent For $(\ref{C:2nd})$, we need only consider the case
     when $a\neq a_1$ and the exceptional point
     $\expt(a,b)=\expt(a_1,b)$ does not lie in $P$, by
     $(\ref{C:1st})$. Again, if both $a$ and $a_1$ lie in $
     \fcl{X_rZ_r}$ or in $\fcl{YZ_r}$, then so does $b$, and we are
     done by Proposition \ref{P:ext_Iso}, since $F_r$ is elementary on
     each side. If this is not the case, and $a$ lies in $P_r\cap
     \fcl{X_rZ_r}$ and $a_1$ in $P_r\cap \fcl{YZ_r}$, then the line
     $b$ lies in $P_r\cap Z_r$, by the Remark \ref{R:ver_acl}
     (\ref{R:ver_acl:pp}). Thus the point $\expt(a,b)=\expt(a_1,b)$
     lies in $\acl(X_r, Z_r)\cap \acl(Y,Z_r)=Z_r$, so we conclude as
     before since $F_r$ is elementary on each side separately.
   \end{claimsternproof}

   As $P_0$ and $P_n$ have the same type, their defect is the same, so
   $\de(P_r)=\de(P_{r+1})$, for all $0\leq r\leq n-1$. Hence, for all
   $a$, $a'$ and $b$ in $P_r$, we have that
   \[ \expt(a,b)=\expt(a',b)\text{  if and only if }
   \expt(F_r(a),F_r(b))=\expt(F_r(a'),F_r(b)).\]

   Since $P_r$ and $P_{r+1}$ are closed in the reduct $\ps_k$, but
   $\tp(P_r)\neq \tp(P_{r+1})$, Proposition \ref{P:ext_Iso} implies
   that $F_r$ restricted to $P_r$ cannot preserve colours. As $F_r$ is
   elementary on each side separately, the colours of lines are
   preserved. Thus, there is a pair $(a,c)$ in $P_r$ whose colour $j$,
   with $0\leq j<k$, is not preserved under $F_r$. We will show now
   that the colour of the pair $(F_r(a), F_r(c))$ is $j+1$.

   Since $F_r$ is elementary on $\fcl{X_rZ_r}$ and on $\fcl{YZ_r}$
   separately, neither $a$ nor $c$ lie in $Z_r$. The independence
   $X_r\ind^{\ps}_{Z_r} Y$ and Remark \ref{R:ver_acl}
   (\ref{R:ver_acl:pp}) yield that there is a line $b$ in $Z_r$
   connecting $a$ and $c$. The characterisation of the independence in
   Corollary \ref{C:color_Forking} implies that $c\neq \expt(a,b)$.
   Hence the line $b$ must have colour $j$. The map $F_r$ is the
   identity on $Z_r$, and the plane $F_r(a)$ is connected to the point
   $F_r(c)$ by $b=F_r(b)$, so the only possible colours for the pair
   $(F_r(a),F_r(c))$ are $j$ or $j+1$. As the colour of the pair
   $(a,c)$ is not preserved, we deduce that $(F_r(a),F_r(c))$ has
   colour $j+1$, as desired.

   Let $F_n$ be the $\cps_k$-elementary map mapping $P_n$ to $P_0$ (as
   both $(e_0, f)$ and $(e_n,f)$ realise the type $p_0$) and write
   $F^r= F_r\circ \ldots\circ F_0$. Notice that the map $F^n$ is the
   identity of $P_0$. Let $(a,c)$ be one of the pairs in $P_0$ whose
   colour $j_0$ changes under $F_0$. The colours of the pairs \[
   (a,c), F^0(a,c), \ldots, F^{n-1}(a,c) \] change at each step by at
   most adding $1$ (modulo $k$), so the colour of $F^{n-1}(a,c)$
   equals $j_0+m$ modulo $k$, for some $1\leq m\leq n$. Since $F_n$
   preserves colours and $F^{n}(a,c)=(a,c)$, we have that $m$ is
   divisible by $k$, and thus $k\leq m\leq n$. We conclude that the
   original cycle had length at least $k$.
 \end{proof}

\begin{remark}\label{R:Bij_ohneZykeln}
  Given a function $\pi:\{0,\ldots, k-1\} \to \{0,\ldots, k-1\}$ with
  no fixed points, we could similarly consider the theory $\cps_\pi$
  of
  colored pseudospaces such that given a line $b$ with colour $i$
  inside a plane $a$, all points in $b$ lie in the section $I_i(a)$
  except one unique exceptional point which lies in $I_{\pi(i)}(a)$.

  The corresponding theory $\cps_\pi$ is not equational. Every closed
  path of real types has length at least the length of the shortest
  $\pi$-cycle.
\end{remark}



\begin{thebibliography}{99}

\bibitem{hA96} H.\ Adler (as H.\ Scheuermann), \emph{Unabh\"angigkeitsrelationen},
Diplomarbeit, University of Freiburg, (1996).

\bibitem{BMPZ14} A.\ Baudisch, A.\ Martin-Pizarro, M.\ Ziegler,
\emph{Ample hierarchy}, Fund. Math. {\bf 224}, (2014), 97--153.

\bibitem{BMPZ17} A.\ Baudisch, A.\ Martin-Pizarro, M.\ Ziegler,
\emph{A model theoretic study of right-angled buildings}, J. Eur. Math. Soc. {\bf 19},
(2017), 3091--3141.

\bibitem{BP00} A.\ Baudisch, A.\ Pillay, \emph{A free pseudospace}, J.
Symb.\ Logic, {\bf 65}, (2000), 443--460.

\bibitem{BMPW15} T. Blossier, A.\ Martin-Pizarro, F. O. Wagner,
  \emph{G\'eom\'etries relatives}, J. Eur. Math. Soc. {\bf 17}, (2015), 229--258.

\bibitem{eH12} E. Hrushovski, \emph{Stable group theory and approximate subgroups},
J. AMS {\bf 25}, (2012), 189--243.

\bibitem{HS89} E. Hrushovski, G. Srour, \emph{Non-equational stable theories},
unpublished notes, (1989).

\bibitem{Ju00}  M.  Junker, \emph{A note on equational theories}, J.
Symb.\ Logic {\bf 65}, (2000), 1705--1712.

\bibitem{mJdL01} M.  Junker, D. Lascar, \emph{The indiscernible
    topology: A mock  Zariski topology}, J. Math.\ Logic {\bf 1},
  (2001), 99--124.

\bibitem{MS17} I. M\"uller, R. Sklinos, \emph{Nonequational stable
    groups}, preprint, (2017), \url{https://arxiv.org/abs/1703.04169}

\bibitem{PiSr84} A. Pillay, G. Srour, \emph{Closed sets and
chain conditions in stable theories}, J. Symb.\ Logic {\bf 49},
(1984),
1350--1362.

\bibitem{kT14}  K.\ Tent, \emph{The free pseudospace is $N$-ample, but
	not $(N+1)$-ample}, J.  Symb.\ Logic {\bf 79}, (2014), 410--428.

\bibitem{TZ17}  K.\ Tent, M. Ziegler, \emph{An alternative axiomization of
$N$-pseudospaces}, preprint, (2017), \url{http://arxiv.org/abs/1705.00588}

\bibitem{zS13} Z. Sela, \emph{Free and hyperbolic groups are not
equational}, preprint, (2013),
\url{http://www.ma.huji.ac.il/~zlil/equational.pdf}

\end{thebibliography}
\end{document}